\long\def\skipit#1{} 
\def\={\,=\,}
\def\+{\,+\,}
\newcommand{\mdef}{\textit}
\newcommand{\ov}{\overline}
\newcommand{\uth}{^{\rm th}}
\newcounter{hours}
\newcounter{minutes}
\newcommand{\printtime}{
	\setcounter{hours}{\time/60}%
	\setcounter{minutes}{\time-\value{hours}*60}
	\ifthenelse{\value{hours}<10}{0}{}\thehours:%
	\ifthenelse{\value{minutes}<10}{0}{}\theminutes}
\numberwithin{equation}{section}
\numberwithin{figure}{section}
\numberwithin{table}{section}
\newtheorem{thm}{Theorem}[section]
\newtheorem{cor}[thm]{Corollary}
\newtheorem{lemma}[thm]{Lemma}
\newtheorem{prop}[thm]{Proposition}
\newtheorem{J-com}{JG-comment}[section]
\theoremstyle{definition}
\newtheorem{defn}{Definition}[section]
\def\dsum{\displaystyle\sum}
\keywords{Permutation-partition pair, Permutation-bipartition pair, Signed graph, embedding}
\subjclass{Primary: 05C10}
\begin{document}

\title {Permutation-bipartition pairs}

\author{Yichao Chen}
\address{School of Mathematics, SuZhou University of Science and Technology, Suzhou, 215009, China}
\email{chengraph@163.com}

\thanks{This work is supported by the JSSCRC (Grant No. 2021530).}

\vskip.51cm
\vskip.51cm

\begin{abstract}Permutation-partition pairs were introduced by Stahl in 1980. These pairs are generalizations of graphs  and graphs on surfaces. They were used to solve some problems for orientable embeddings of graphs. In this paper, we introduce a particular type of permutation-partition pair, called permutation-bipartition pair, which can be seen as generalizations of signed graphs and signed graph embeddings.  Some applications are given. 
\end{abstract}

\maketitle  


\section{Basic Terminology and Background}

A \mdef{graph} $G=(V(G),E(G))$ is permitted to have  loops and multiple edges. We use $S$ to denote a\textit{ surface} without regard to orientability. An \textit{embedding} (or a map $M$) of  $G$  into a closed surface $S$ is a \textit{{cellular} embedding}.  A \textit{signed graph} $\Sigma=(G,\sigma)$ is a graph $G$ together with a mapping $\sigma$ which assigns $+1$ or $-1$ to each edge of the graph $G.$ If $\sigma(e)=1,$ we call the edge \textit{positive}, otherwise $\sigma(e)=-1$ and the edge is \textit{negative}. In this paper, the negative edges and positive edges in $\Sigma$ are represented by  dashed lines and solid lines respectively.

A \mdef{rotation at a vertex} $v$ of a graph $G$ is a cyclic ordering of the edge-ends incident at $v$.   A \mdef{ rotation system} $\rho$ of a graph $G$ is an assignment of a rotation at every vertex of $G$. It is well-known that there is an one-to-one
correspondence between the rotation systems and orientable embeddings.   A \textit{2-cell embedding} of a signed graph $\Sigma$ can be described combinatorially by a \textit{signed rotation system} $(\rho,\sigma),$ where $\rho$ is a rotation system of $G$ and $\sigma$ is the \textit{twist-indicator}, i.e., if $\sigma(e)=1$, then the edge $e$ is twisted; otherwise $\sigma(e)=0$ and $e$ is untwisted. It is obvious that if  $\sigma(e)=0$, for all $e\in E(G)$, then the signed rotation system $(\rho,\sigma)$ is {equivalent to} a  rotation system. Recall that any embedding of $G$ into a surface $S$ can be described by a signed rotation system. A signed rotation system is also known as a general rotation system in topological graph theory.
A \textit{switch} of a signed rotation system at a vertex $v$ means reversing the rotation about $v$ and changing the sign on each of its incident edges. Two signed rotation system are \textit{equivalent} if one can be transformed into another by a sequence of switches. For terms and definitions that are not explained here, we refer to the monograph on topological graph theory by Gross and Tucker \cite{GT87}.

Signed graphs were introduced by F. Harary in 1950s to model social relations involving disliking, indifference, and liking. There are many researches on signed graph embeddings, and we only list a few results here.  T. Zaslavsky considered orientation embedding of signed graphs, gave a characterization for the projective-planar signed graph (characterized by six small forbidden minors or eight small forbidden topological
subgraphs), and studied other topics for signed graph embeddings \cite{Zas92,Zas93,Zas96a,Zas97a,Zas97b}. \v{S}ir\'{a}n showed that Duke's classical theorem for graph embeddings does not extend to signed graph embeddings, and a signed graph orientation-embeds in only one surface if and only if two cycles are vertex disjoint \cite{Sir91a,Sir91b}. \v{S}ir\'{a}n and \v{S}koviera \cite{SS91} gave  characterizations of the maximum genus of a signed graph. Lv \cite{L15} calculated the largest demigenus for all signatures on $K_{3,n}.$ For more topics, we refer to the survey paper of Zaslavsky \cite{Zas98}.

A \textit{permutation-partition pair} $(P,\Pi)$ consists of an arbitrary permutation $P$ and an arbitrary partition $\Pi,$ both defined over some common underlying set $S.$ It can be seen as a combinatorial generalization of  graphs and graph embeddings. The notion of a permutation-partition pair was introduced by Stahl in 1980 \cite{Sta80}.  It is a useful tool to tackle orientable embeddings of a graph, such as minimum genus, maximum genus, average genus, and genus distribution, we refer the reader to \cite{Bon94,Sta82,Sta91a,Sta92,Sta97}, etc.

In this paper, we introduce the permutation-bipartition pair which is a generalization of signed graphs and signed graph embeddings. 
It can also be thought of as a generalization for permutation-partition pair to the non-orientable case. Our paper is organized as follows. In Section 2, we give a definition for permutation-bipartition pair, and introduce its embedding, region, and Euler-genus. We also prove a region (genus) Walkup reduction for embeddings of the permutation-bipartition pairs. It is then used to calculate region distributions (or Euler-genus polynomials) of linear signed graph families. The expected genus is discussed in Section 3.

\section{Permutation-bipartition pairs}

\subsection{Tutte's permutation axiomatization for embeddings}
Here we will introduce Tutte's permutation axiomatization for graph embeddings. It is our motivation and starting point  to introduce the permutation-bipartition pair. Suppose that $K=\{1,\alpha,\beta,\alpha\beta\}$ is the \textit{Klein four-group}. For an edge $a\in E,$ we introduce its \textit{two sides} and \textit{two ends}.  Assume $a$ itself at one end and on one side.  Let $\alpha$ be the permutation that interchanges symbols at the same end but different sides of an edge, for each edge. Let $\beta$ be the permutation that interchanges symbols at the same side but different ends of an edge, for each edge.  For $a\in E$, $Ka=\{a,\alpha a, \beta a,\alpha\beta a \}$ is called a \textit{quadricell}. Figure \ref{fig:tutte} shows an example for the above concepts.

 \begin{figure}[h]
\centering
\includegraphics[width=6cm]{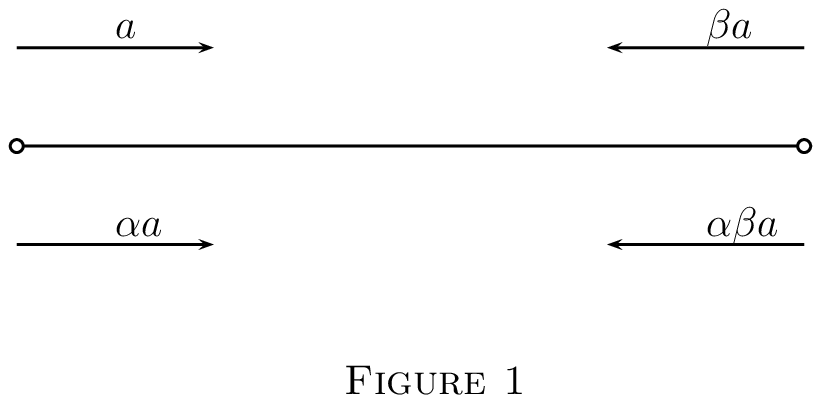}
\caption{}\label{fig:tutte}
\end{figure}

An embedding of $\Sigma$ on some surfaces induces a bi-rotation system $\rho$ on $V$ such that at each vertex, all its incident semi-edges are with a cyclic order, called a bi-rotation. Since each edge is considered as a quadricell, the bi-rotation at a vertex $v$ is $$\left\{(a,\rho a,\ldots,\rho^{(m-1)}a),(\alpha a,\alpha \rho^{-1} a,\ldots,\alpha \rho^{-(m-1)}a)\right\}.$$

For $k\geq1,$ let $\alpha_1,\alpha_2,\ldots,\alpha_k$ be permutations acting on the same set, we denote by $\langle \alpha_1,\alpha_2,\cdots,\alpha_k \rangle$ the group of permutations generated by them. Let $\iota$ be the \textit{identity permutation.} Tutte \cite{Tut84} considered graph embeddings as  permutations and provided an axiomatization for embeddings of $G$ in the following manner.
\begin{defn}\label{tutte}
A connected premap is an ordered triple $(\alpha, \beta,\rho)$ of permutations acting on a
set $S$ of $4m$ elements, such that
\begin{enumerate}
  \item  $\alpha^2=\beta^2=\iota, \alpha\beta=\beta\alpha=\gamma.$
  \item  for any $a\in S,$ then $a, \alpha a, \beta a, \alpha\beta a$ are distinct.
  \item  $\alpha \rho=\rho\alpha^{-1}$.
  \item  for each $a\in S$, the orbits of $\rho$ through $a$ and  $\alpha a$ are distinct.
  \item $\langle \alpha, \beta,\rho\rangle$ acts transitively on $S.$
\end{enumerate}
\end{defn}

\begin{thm} \label{tutte1}Let $M$ be a connected map in the sense of Definition \ref{tutte}, and let $S$ be a set of $4m$ symbols assigned bijectively to the side-end positions of $M.$ Let $\beta$ be the permutation that interchanges symbols at the same side but different ends of an edge, for each edge. Let $\alpha$ be the permutation that interchanges symbols at the same end but different sides of an edge, for each edge .
\begin{itemize}
  \item 	Vertices: Let $v$ be a vertex of $M$ and $(a_1,a_2,\ldots,a_{2k})$ the list of symbols encountered in a tour of the side-end positions incident with $v$ starting at an arbitrary symbol $a_1$, in the unique (local) direction such that $a_2 = \alpha a_1$. Then the permutation $\rho$ in Definition \ref{tutte} is the permutation whose disjoint cycles are associated in pairs with each vertex $v,$ and have the form $(a_1 a_3\cdots a_{2k-1})$ and $(a_{2k} a_{2k-2}\cdots a_2) = (\alpha a_{2k-1} \alpha a_{2k-3}\cdots\alpha a_1).$ The degree of $v$ is $k.$ (If $k=0$ we have a pair of empty cycles associated with the isolated vertex $v.$)
  \item Edges: For each $a\in S$, the elements of $a, \alpha a, \beta a, \alpha\beta a$ are the symbols assigned to the four side-end positions of the same edge.
  \item Faces: Let $f$ be a face of $M$ and $(b_1b_2\cdots b_{2j})$ the list of symbols encountered in a tour of the side-end positions incident with $f$ starting at an arbitrary symbol $b_1$, in the unique (local) direction such that $b_2 = \beta b_1$. Then the disjoint cycles of $\rho \alpha\beta$ are associated in pairs with each face $f,$ and have the form $(b_1 b_3\cdots b_{2j-1})$ and $(b_{2j} b_{2j-2}\cdots b_2) = (\beta b_{2j-1} \beta b_{2j-3} \cdots \beta b_1).$ The degree of $f$ is $j.$ (If $j = 0,$ we have a pair of empty cycles associated with the isolated face $f$.

\end{itemize}
\end{thm}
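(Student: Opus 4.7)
The plan is to read off each ingredient of the triple $(\alpha,\beta,\rho)$ directly from the combinatorics of the map $M$ on its set $S$ of $4m$ side-end positions, verify the five axioms of Definition~\ref{tutte} by examining one edge (for axioms (1)--(2)), one vertex (for axioms (3)--(4)), and using connectivity (for axiom (5)), and then argue that the vertex, edge, and face descriptions are essentially rewordings of how $\rho$, the quadricells, and the product $\rho\alpha\beta$ were defined.

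To begin, I would note that $\alpha$ and $\beta$ are defined edge-locally as pairwise swaps, so both are fixed-point free involutions, giving $\alpha^2 = \beta^2 = \iota$. On the four positions of a single edge, $\alpha$ and $\beta$ act as two commuting reflections (one across the ``side'' axis, the other across the ``end'' axis), so $\alpha\beta = \beta\alpha$; this common product is the diagonal swap $\gamma$ on that edge. That settles axiom (1). Axiom (2) is automatic, since by construction $a, \alpha a, \beta a, \alpha\beta a$ are the four distinct side-end labels of the edge containing $a$.

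For axiom (3), I would use that the two cycles of $\rho$ at a vertex $v$ (one per side of $v$) are related by orientation reversal, and that the side-flip $\alpha$ carries each symbol in one cycle to the correspondingly placed symbol in the reversed cycle; algebraically, this is precisely $\alpha\rho = \rho\alpha^{-1}$. Axiom (4) then says these two cycles are genuinely different $\rho$-orbits, which holds because the two sides of $v$ contribute disjoint lists and $\alpha$ permutes them. Axiom (5) follows from the connectivity of $G$: any two symbols in $S$ can be joined by alternating $\rho$-moves around vertices with $\beta$-moves across edges, and $\alpha$ supplies access to both sides of each quadricell.

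The vertex and edge clauses of the conclusion are then straightforward restatements: the disjoint cycles of $\rho$ are, by the definition just given, the lists $(a_1a_3\cdots a_{2k-1})$ paired with their $\alpha$-reverses $(a_{2k}a_{2k-2}\cdots a_2)$, while the quadricell $\{a,\alpha a,\beta a,\alpha\beta a\}$ is literally the four-symbol label set of one edge. The substance of the theorem is concentrated in the face clause, and I expect that to be the main obstacle. The key local observation is that one application of $\rho\alpha\beta$ to a side-end position $b_1$ first crosses the edge at $b_1$ (via $\beta$, to the other end on the same side), flips to the opposite side at that new end (via $\alpha$), and then rotates to the first side-end of the next incident edge-end (via $\rho$); geometrically this is a single step along the boundary of the face on one side of $b_1$. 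I would make this precise with a small local picture at each vertex--edge incidence, then iterate to conclude that $\rho\alpha\beta$ enumerates the $j$ side-end positions of that face in cyclic order, producing $(b_1b_3\cdots b_{2j-1})$. The partner cycle $(b_{2j}b_{2j-2}\cdots b_2) = (\beta b_{2j-1}\beta b_{2j-3}\cdots \beta b_1)$ then comes for free by tracing the same face on the opposite side, the $\beta$-reversal being forced by $\alpha\rho = \rho\alpha^{-1}$ together with $\alpha\beta = \gamma$.
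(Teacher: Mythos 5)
You should know at the outset that the paper supplies no proof of this theorem: it is quoted as background from Tutte's permutation axiomatization of maps (\cite{Tut84}), stated immediately after Definition~\ref{tutte} and never argued. So there is no in-paper proof to compare against, and your proposal has to be judged on its own terms as a reconstruction of Tutte's argument.

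As such a reconstruction it is a reasonable outline, and you correctly locate the substance in the face clause; but that is exactly the part you have deferred rather than done. The ``small local picture at each vertex--edge incidence'' is the entire content of the face statement, and your verbal step (``first $\beta$, then $\alpha$, then $\rho$'') silently fixes a composition order that conflicts with the paper's own convention of right actions (elsewhere it writes $aQ=b$), so you need to pin down whether $b_1(\rho\alpha\beta)$ means rotate-then-flip-then-cross or the reverse before the claim that one application of $\rho\alpha\beta$ is ``a single step along the boundary of the face'' can be checked. The assertion that the partner cycle $(\beta b_{2j-1}\,\beta b_{2j-3}\cdots\beta b_1)$ ``comes for free'' also needs an actual computation, namely that conjugating $\rho\alpha\beta$ by $\beta$ gives its inverse; carrying that out requires the relation $\alpha\rho\alpha=\rho^{-1}$, and here you should notice that the axiom as printed, $\alpha\rho=\rho\alpha^{-1}$, collapses to $\alpha\rho=\rho\alpha$ because $\alpha^2=\iota$ --- it is evidently a misprint for $\alpha\rho=\rho^{-1}\alpha$, which is what your own description of the two vertex cycles as $\alpha$-reverses of each other actually encodes. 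Your verifications of axioms (1), (2), (4) and (5) are fine and essentially forced by the definitions. In short: nothing in your plan is wrong, but the one clause that carries the theorem is still a plan, not a proof.
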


\begin{thm}\label{Tutte2}
Let $M=(\alpha, \beta, \rho, S )$ be a connected premap. If the action of $\langle \rho,\gamma\rangle$ on $S$ is transitive, then $M$ is non-orientable, otherwise there are exactly two orbits and $M$ is orientable.
\end{thm}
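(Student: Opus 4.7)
The plan is to split the proof into an algebraic half (the number of $\langle \rho, \gamma\rangle$-orbits on $S$ is at most two) and a topological half (identifying the two-orbit case with orientability in Tutte's sense).

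First I would establish that $\alpha$ centralizes $\langle \rho, \gamma\rangle$. Axiom (1) gives $\alpha^{-1} = \alpha$, so axiom (3) reads $\alpha\rho = \rho\alpha$. For $\gamma = \alpha\beta$, the commutation $\alpha\beta = \beta\alpha$ in axiom (1) yields $\alpha\gamma = \alpha^2\beta = \beta = \alpha\beta\alpha = \gamma\alpha$. Hence conjugation by $\alpha$ permutes the orbits of $\langle \rho, \gamma\rangle$ on $S$. Since $\beta = \alpha\gamma$, the group $\langle \alpha, \beta, \rho\rangle = \langle \alpha, \gamma, \rho\rangle$ decomposes set-theoretically as $\{\iota, \alpha\}\cdot \langle \rho, \gamma\rangle$. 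By axiom (5) it acts transitively on $S$, so the $\langle \rho, \gamma\rangle$-orbit of any point together with its $\alpha$-image covers $S$. As $\alpha$ is an involution permuting orbits, this forces either one $\alpha$-invariant orbit or two orbits swapped by $\alpha$.

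Next I would match these two cases with non-orientability and orientability, following Tutte \cite{Tut84}. An orientation of $M$ is a partition $S = S^+ \sqcup S^-$ of equal-size parts such that both $\rho$ and $\gamma$ preserve the partition while $\alpha$ exchanges the parts --- equivalently, a choice of a single cyclic rotation at each vertex (one of the two dual $\rho$-cycles listed in Theorem \ref{tutte1}) that is globally consistent. Given two orbits $O^+$ and $O^- = \alpha(O^+)$, setting $S^\pm = O^\pm$ produces such a partition: axiom (4), which places $a$ and $\alpha a$ in different $\rho$-orbits, is exactly what ensures the two $\rho$-cycles at every vertex split between $O^+$ and $O^-$ rather than both landing on one side. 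Conversely, any orientation gives two $\langle \rho, \gamma\rangle$-orbits; so one orbit corresponds to non-orientable $M$ and two orbits to orientable $M$.

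The main obstacle I expect is this last identification: verifying that the algebraic partition $(O^+, O^-)$ really supplies a topologically consistent orientation, which requires tracking how the $\rho\gamma$-face cycles of Theorem \ref{tutte1} split between $O^+$ and $O^-$ so that adjacent faces inherit opposite boundary traversals. The centralizer argument and orbit count are a short group-theoretic computation and form the easy half; the topological part hinges on the vertex, edge, and face descriptions of Theorem \ref{tutte1} together with axiom (4).
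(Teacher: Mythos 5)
The paper does not actually prove this statement: Theorem~\ref{Tutte2} is imported verbatim from Tutte's axiomatization and the text immediately defers to \cite{Liu09,Tut84}, so there is no in-paper argument to compare yours against. Judged on its own, your algebraic half is correct and complete, and it is the only part of the statement with genuine content beyond the definitions: axioms (1) and (3) give $\alpha\rho=\rho\alpha$ and $\alpha\gamma=\gamma\alpha$, so $\alpha$ centralizes $\langle\rho,\gamma\rangle$, and since $\beta=\alpha\gamma$ the full group $\langle\alpha,\beta,\rho\rangle$ is $\{\iota,\alpha\}\cdot\langle\rho,\gamma\rangle$; transitivity of the full group (axiom (5)) then forces either one $\alpha$-invariant orbit or exactly two orbits interchanged by $\alpha$. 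This is the standard argument and it is airtight.

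Two remarks on your second half. First, the role you assign to axiom (4) is misattributed: in the two-orbit case, each $\rho$-cycle lies entirely inside one $\langle\rho,\gamma\rangle$-orbit, and $a$ and $\alpha a$ lie in different orbits because $\alpha$ swaps them, so the two $\rho$-cycles at a vertex split between $O^+$ and $O^-$ automatically; axiom (4) is what guarantees they are two distinct cycles in the first place, which matters in the transitive case, not here. Second, whether your proof is complete depends on what ``orientable'' means for a premap. In Tutte's treatment the orientability of a combinatorial map is \emph{defined} by the existence of a partition $S=S^+\sqcup S^-$ preserved by $\rho$ and $\gamma$ and exchanged by $\alpha$ (equivalently, a coherent choice of one of the two conjugate $\rho$-cycles per vertex as in Theorem~\ref{tutte1}); under that reading your argument already finishes the proof, since a transitive $\langle\rho,\gamma\rangle$-action admits no such invariant bipartition and two orbits supply one. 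If instead ``orientable'' is read topologically, as orientability of the realized surface, then the identification you flag as the ``main obstacle'' is a genuine remaining step --- one must check that the face cycles of $\rho\gamma$ from Theorem~\ref{tutte1} inherit compatible boundary orientations from the bipartition --- and you have correctly located but not closed that gap. Since the paper treats the theorem as a quoted background fact, supplying the combinatorial definition explicitly and then invoking your orbit count would be the cleanest way to make the proof self-contained.
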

For more on Tutte's axiomatization of graph embeddings, see \cite{Liu09,Tut84}, etc.
\subsection{ Permutation-bipartition pairs}

 Suppose $S$ and $S_{\theta}$ are two disjoint finite sets such that $|S|=|S_{\theta}|.$ Let $\theta$ be a bijection from $S$ to $S_{\theta}.$ We call the ordered pair $(P, \Pi\cup\theta\Pi)$  a \textit{permutation-bipartition pair}, where the permutation $P$ is defined over $\textbf{S}=S\cup S_{\theta}$ such that if $(a,b,\ldots,f)$ is a cycle of $P$ then $(\theta a,\theta f,\ldots,\theta b)$ is also a cycle of $P$, and the partitions $\Pi=\{\Pi_i\}_{i=1}^{k}$ and $\theta\Pi=\{\theta\Pi_i\}_{i=1}^{k}$ are defined over $S$ and $S_{\theta}$ respectively. For each $i$ ($1\leq i\leq k$), we call the partition ${\Pi_i\cup \theta\Pi_i}$ a \textit{vertex }of the pair $(P, \Pi\cup\theta\Pi).$ For $a\in S$, we call the $\{a,\theta a\}$ a \textit{semi-edge} of the pair $(P, \Pi\cup\theta\Pi).$
 
 An \textit{embedding} of  a {permutation-bipartition pair}  $(P, \Pi\cup\theta\Pi)$ of $S\cup S_{\theta}$ is defined as a permutation $Q$  such that 
 $Q$ is a bi-rotation defined on $\Pi\cup\theta\Pi,$ namely, if $\Pi_i=\{a_{i,1},a_{i,2},,\ldots,a_{i,k}\}$ and $\theta\Pi_i=\{\theta a_{i,1},\theta a_{i,2},\ldots,\theta a_{i,k}\} $ for $1\leq i \leq k,$ then \textit{bi-rotation} at $\Pi_i\cup \theta \Pi_i$ is $$\left\{(a_{i,j_1},a_{i,j_2},\ldots,a_{i,j_k}),(\theta a_{i,j_k},\ldots,\theta a_{i,j_2},a_{i,j_1})\right\},$$ where $j_1j_2\cdots j_k$ is a permutation on the set $\{1,2,\dots,k\}.$ We denote $S(\Pi\cup\theta\Pi)$ be the set of all bi-rotations defined on $\Pi\cup\theta\Pi.$

We now proceed to show that how an embedding $Q$ of $(P, \Pi\cup\theta\Pi)$ has a natural interpretation as embeddings of signed graphs. To each signed graph $\Sigma=(G,\sigma),$ we can associate a permutation-bipartition pair $(P_{\Sigma}, \Pi_{\Sigma}\cup\alpha\Pi_{\Sigma})$ in the following way. Let $M$ be the symmetric graph obtained by replacing each edge $a$ of $G$ with four elements $a, \alpha a, \beta a, \gamma a.$

Let

\begin{align*}
S\cup S_\alpha&=\bigcup_{a\in E(G)}Ka=\bigcup_{a\in E(G)}\{a, \alpha a, \beta a, \gamma a\},\\
P_\Sigma&=\gamma=\prod_{a\in E(G)}(a,\gamma a)( \alpha a, \beta a),\\
\Pi_{\Sigma}&=\bigcup_{v\in G}\Pi_v,\\
\alpha\Pi_{\Sigma}&=\bigcup_{v\in G}\alpha\Pi_v,
\end{align*}
where $\Pi_v=\{a,\rho a,\ldots,\rho^{(m-1)}a\},\alpha\Pi_v=\{\alpha a,\alpha \rho^{-1} a,\ldots,\alpha \rho^{-(m-1)}a\}.$

Fox example, suppose $\Sigma$ is the signed graph of Figure \ref{fig:mail} and all the signs of edges of $\Sigma$ are positive.  Then $$P_\Sigma=\gamma_{uv}\gamma_{vx}\gamma_{wu}\gamma_{wz}\gamma_{zu}\gamma_{zy}\gamma_{vy}\gamma_{xy},$$ and
$$\Pi_{\Sigma}=\{\Pi_u,\alpha\Pi_u,\Pi_v,\alpha\Pi_v,\Pi_x,\alpha\Pi_x,\Pi_y,\alpha\Pi_y,\Pi_w,\alpha\Pi_w,\Pi_z,\alpha\Pi_z\}$$
where  $\gamma_{uv}=(1,4)(2,3),\gamma_{vx}=(5,8)(6,7), \gamma_{wx}=(9,12)(10,11), \gamma_{wu}=(13,16)$ $(14,15),\gamma_{wz}=(17,20)(18,19),$ $\gamma_{zu}=(21,24)(22,23),\gamma_{zy}=(25,28)(26,27),$ $\gamma_{vy}=(29,32)(30,31),$ $\gamma_{xy}=(33,36)(34,35),$ $\Pi_u=\{2,15,23\},\alpha\Pi_u=\{1,16,24\},$ $\Pi_v=\{3,6,30\},\alpha\Pi_v=\{4,5,29\},$ $\Pi_x=\{7,9,35\},\alpha\Pi_x=\{8,10,36\},$ $\Pi_y=\{27,31,32\},\alpha\Pi_y=\{28,32,33\},$ $\Pi_w=\{12,14,18\},\alpha\Pi_w=\{11,13,17\},$ and $\Pi_z=\{19,22,26\},\alpha\Pi_z=\{20,21,25\}.$
 \begin{figure}[h]
\centering
\includegraphics[width=5cm]{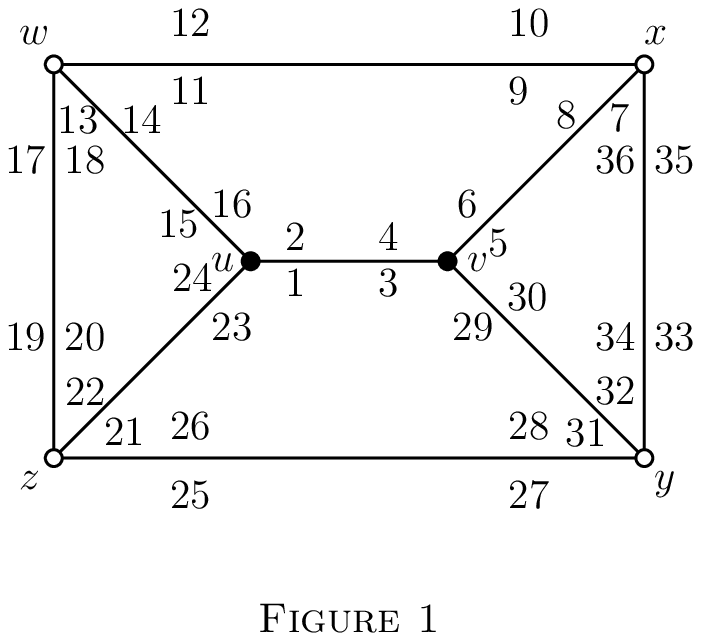}
\caption{}\label{fig:mail}
\end{figure}

A plane embedding of $\Sigma$ with

\begin{eqnarray*}P_{\Sigma}=&& (1,4)(2,3)(5,8)(6,7)(9,12)(10,11)(13,16)(14,15)(17,20)(18,19)\\&&(21,24)(22,23)(25,28)(26,27)(29,32)(30,31)(33,36)(34,35),
\end{eqnarray*}

\begin{eqnarray*}\label{Exam:1}Q_{\Sigma}=&&(1,16,24)(2,23,15)(5,4,29)(3,6,30)(10,8,36)(7,9,35)\\&&(11,17,13)(12,14,18)(21,20,25)(19,22,26)(33,32,28) (27,31,34)
\end{eqnarray*} and \begin{eqnarray*}P_{\Sigma}Q_{\Sigma}=&&(1,29,28,21)(3,23,26,31)(11,8,4,16)(9,14,2,6)\\
&&(12,35,27,19)(10,17,25,33)(5,36,32)(7,30,34)\\&&(13,24,20)(15,18,22).\end{eqnarray*}

Again, suppose $\Sigma$ is the signed graph of Figure \ref{fig:mail} and all edges of $\Sigma$ are positive except the edge $uv$. In this case, we have $$P_\Sigma=\gamma_{uv}\gamma_{vx}\gamma_{wu}\gamma_{wz}\gamma_{zu}\gamma_{zy}\gamma_{vy}\gamma_{xy},$$ where $\gamma_{uv}=(1,3)(2,4),\gamma_{vx}=(5,8)(6,7), \gamma_{wx}=(9,12)(10,11), \gamma_{wu}=(13,16)$ $(14,15),\gamma_{wz}=(17,20)(18,19),$ $\gamma_{zu}=(21,24)(22,23),\gamma_{zy}=(25,28)(26,27),$ $\gamma_{vy}=(29,32)(30,31),$ $\gamma_{xy}=(33,36)(34,35).$


A projective plane embedding of $\Sigma$ with

\begin{eqnarray*}\label{Exam:1}Q_{\Sigma}=&&(1,16,24)(2,23,15)(5,4,29)(3,6,30)(10,8,36)(7,9,35)\\&&(11,17,13)(12,14,18)(21,20,25)(19,22,26)(33,32,28) (27,31,34)
\end{eqnarray*} and \begin{eqnarray*}P_{\Sigma}Q_{\Sigma}=&&(1,6,9,14,2,29,28,21)(3,16,11,8,4,23,26,31)\\
&&(12,35,27,19)(10,17,25,33)(5,36,32)(7,30,34)\\&&(13,24,20)(15,18,22).\end{eqnarray*}

 Suppose $b$ is a bit of $S,$ we denote by $P/b$ the permutation obtained by deleting $b$ in the disjoint cycle decomposition of $P.$ Similarly, $\Pi/b$ denotes the partition of $S-b$ by deleting the occurence of $b$ from the member of  $\Pi$ that contains it.   For $1\leq i\leq k,$ if $a,b\in S$ are in a member $\Pi_i$ of $\Pi$, we write $a\equiv b (mod\ \Pi).$  If $a\equiv b (mod\ \Pi),$ then $\theta a\equiv \theta b (mod\ \theta\Pi).$ A \textit{constraint} on the pair $(P, \Pi\cup\theta\Pi)$  is an ordered pair, denoted by $\{a\rightarrow b, \theta a\rightarrow \theta b\}$ such that $a\equiv b (mod\ \Pi).$

  A \textit{constraint set} $C$ on $(P, \Pi\cup\theta\Pi)$  is  a set given by $C=\{a_i\rightarrow b_i, \theta a_i\rightarrow \theta b_i\}_{i=1}^c,$ where the $a_i\rightarrow b_i$ are constraints. It is obvious that if the $a_i$ $(\theta a_i)$ are all distinct,  and the $b_i$ $(\theta b_i)$  are also all distinct.

Suppose that $C$ is a constraint set on  $(P, \Pi\cup\theta\Pi)$. We define  $$S(\Pi\cup\theta\Pi, C)=\{Q\in S(\Pi\cup\theta\Pi)| aQ=b, {\rm and}\ \theta b Q=\theta a\}$$

 If $a\equiv b (mod\ \Pi)$ then the pair $(P, \Pi\cup\theta\Pi)|_{\theta a\rightarrow \theta b}^{a\rightarrow b}$ is the pair $(P_{a,\theta a}, \Pi_{a}\cup\theta\Pi_{a}),$ where

\begin{align*}
\Pi_{a}&=\Pi-b, \theta\Pi_{a}=\theta\Pi-\theta b\\
P_{a}&=\left\{
                                                        \begin{array}{ll}
                                                         P(babP)/b& \hbox{if $a\neq bP\neq b$  ;} \\
                                                       P(ba)/b , & \hbox{if $a=bP\neq b$;}\\
                                                       P/b , & \hbox{if $bP= b$}.
\end{array}
                                                      \right.
\\
P_{a,\theta a}&=\left\{
                                                        \begin{array}{ll}
                                                   (\theta a,\theta b, \theta b P_a^{-1})P_a/\theta b & \hbox{if $\theta a\neq \theta b P_a^{-1}\neq \theta b$ ;} \\
                                                            (\theta b,\theta a )P_a/\theta b, & \hbox{if  $\theta a= \theta b P_a^{-1} \neq \theta b$;}\\
                                                            P_a/\theta b, & \hbox{if  $\theta b P_a^{-1} = \theta b.$}
                                                        \end{array}
                                                      \right.
\end{align*}


Recall that the permutation $P_a$ was defined by Stahl. The definition for the permutation $P_{a,\theta a}$ from $P_a$ here is new. It's a crucial step to express the embeddings of a permutation-bipartition pair in terms of those of smaller pairs.

 The two permutations  $P_a$  and $P_{a,\theta a}$ above actually have two  natural visual interpretations. If  $a$ and $b$ belong to the same orbit $B = (a c\cdots d b e \cdots f)$ of $P,$ then $P_a$ is obtained from $P$ by splitting $B$ (at $a$ and at $b$) into two cycles, and deleting $b$ in order to obtain two orbits $B_1B_2= (a c \cdots d)(e \cdots f),$ and all the other orbits of $P$ inherit completely as $P_a$.
 If  $a$ and $b$ belong to distinct orbits $B_1= (a c \cdots d)$ and $B_2 = (b e\cdots f)$ of $P,$ then $P$ is obtained by combining $B_1$, and $B_2$ into a single cycle and deleting $b$ in order to get the cycle $B=(a c \cdots d e \cdots f),$ and all the other orbits of $P$ inherit completely by $P_a.$

Again, here's an intuitive explanation for deriving $P_{a,\theta a}$ from $P_a.$ If  $\theta a$ and $\theta b$ belong to the same orbit $B = (\theta a, c^{'},\cdots, d^{'},\theta b, e^{'}, \cdots, f^{'})$ of $P_a,$ then we swap $\theta a$ and $\theta b$ in $B$ so as to get $B^{'}=(\theta b, c^{'},\cdots, d^{'},\theta a, e^{'} \cdots f^{'})$. And $P_{a,\theta a}$ is obtained from $P_a$ by splitting $B{'}$ (at $\theta a$ and at $\theta b$) into two cycles, and deleting $\theta b$ in order to obtain two orbits $B_1B_2= (c^{'} \cdots d^{'})(\theta a, e^{'}, \cdots, f^{'}),$ and all the other orbits of $P_{a,\theta a}$ inherit completely as $P_a$. If  $\theta a$ and $\theta b$ belong to distinct orbits $B_1= (\theta a, c^{'},\cdots, d^{'})$ and $B_2 = (\theta b, e^{'}, \cdots, f^{'})$ of $P_a,$ again, we swap $\theta a$ and $\theta b$ in $B_1$ and $B_2$ so as to get $B_1^{'}=(\theta b, c^{'},\cdots, d^{'})$ and $B_2^{'}=(\theta a, e^{'} \cdots f^{'})$. $P_{a,\theta a}$ is obtained by combining $B_1^{'}$, and $B_2^{'}$ into a single cycle and deleting $\theta b$ in order to get the cycle $B=(\theta a, e^{'} \cdots f^{'},c^{'},\cdots, d^{'})$ and all the other orbits of $P_a$ inherit completely by $P_{a,\theta a}.$

\begin{prop} If $(P, \Pi\cup\theta\Pi)$ is a permutation-bipartition pair, then $(P_{a,\theta a}, \Pi_{a}\cup \theta \Pi_{a})$ is also a permutation-bipartition pair.
\end{prop}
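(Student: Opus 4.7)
The plan is to verify the two defining conditions of a permutation-bipartition pair for $(P_{a,\theta a}, \Pi_a\cup\theta\Pi_a)$. That $\Pi_a$ is a partition of $S\ssm\{b\}$ and $\theta\Pi_a$ a partition of $S_\theta\ssm\{\theta b\}$ is immediate, since each is obtained by removing a single element from its unique block. That $P_{a,\theta a}$ is a permutation of $(S\ssm\{b\})\cup(S_\theta\ssm\{\theta b\})$ follows by inspection of its piecewise definition. The substantive content is the $\theta$-cycle-pairing property: whenever $(x_1,x_2,\ldots,x_m)$ is a cycle of $P_{a,\theta a}$, the cycle $(\theta x_1,\theta x_m,\ldots,\theta x_2)$ is also a cycle of $P_{a,\theta a}$.

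I would establish the pairing property by case analysis on whether $a$ and $b$ lie in the same cycle of $P$. In the generic instance of Case 1, write the common cycle as $B=(a,c_1,\ldots,c_r,b,e_1,\ldots,e_s)$; the hypothesis that $(P,\Pi\cup\theta\Pi)$ is a permutation-bipartition pair then forces the companion cycle of $P$ to be $\theta B=(\theta a,\theta e_s,\ldots,\theta e_1,\theta b,\theta c_r,\ldots,\theta c_1)$. The visual description of $P_a$ splits $B$ into $(a,c_1,\ldots,c_r)$ and $(e_1,\ldots,e_s)$. Because $\theta a$ and $\theta b$ still share the cycle $\theta B$ in $P_a$, the visual description of $P_{a,\theta a}$ swaps them and splits, producing $(\theta e_s,\ldots,\theta e_1)$ and $(\theta a,\theta c_r,\ldots,\theta c_1)$. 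Comparing the four resulting cycles, one checks that $(a,c_1,\ldots,c_r)$ pairs with $(\theta a,\theta c_r,\ldots,\theta c_1)$ and $(e_1,\ldots,e_s)$ pairs with $(\theta e_s,\ldots,\theta e_1)$ under the pairing law. Case 2 is symmetric: $a$ and $b$ lie in distinct cycles $(a,c_1,\ldots,c_r)$ and $(b,e_1,\ldots,e_s)$ of $P$, with $P$-companions $(\theta a,\theta c_r,\ldots,\theta c_1)$ and $(\theta b,\theta e_s,\ldots,\theta e_1)$; the merge step for $P_a$ and the swap-merge step for $P_{a,\theta a}$ combine these into a single cycle whose $\theta$-companion is again one of the new cycles, as required.

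What remains is to handle the degenerate branches in the piecewise formulas, namely $a=bP$ or $bP=b$ on the $P_a$ side and $\theta a=\theta b P_a^{-1}$ or $\theta b P_a^{-1}=\theta b$ on the $P_{a,\theta a}$ side. Each collapse corresponds to one of the lists $c_i$ or $e_j$ above being empty, and a direct inspection recovers the pairing in each branch. The main obstacle I anticipate is combinatorial rather than conceptual: matching the three branches in the definition of $P_a$ against the three branches in the definition of $P_{a,\theta a}$ and confirming consistency in each of the resulting combined subcases. All cycles of $P$ not meeting $\{b,\theta b\}$ are inherited untouched by both operations, so their pairings pass through automatically, and this together with the case analysis completes the verification.
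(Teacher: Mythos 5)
Your overall strategy --- reduce everything to the $\theta$-cycle-pairing property and verify it via the visual split/merge descriptions of $P_a$ and $P_{a,\theta a}$ --- is the same as the paper's, and your Case 1 (where $a$ and $b$ share a cycle $B$, with $\theta a,\theta b$ sitting in the distinct companion cycle $\theta B^{-1}$) matches the paper's first case. The gap is in your Case 2. When $a$ and $b$ lie in distinct cycles you tacitly assume that the four cycles through $a$, $b$, $\theta a$, $\theta b$ are pairwise distinct, so that the $P_a$ step merges two of them and the $P_{a,\theta a}$ step merges the other two, ``into a single cycle'' as you put it. But there is a third configuration, which the paper treats as a separate case: the cycle through $a$ may itself contain $\theta b$, so that the cycle of $a$ and the cycle of $b$ are each other's $\theta$-companions, as in $P=(a,c,\ldots,d,\theta b,e,\ldots,f)(\theta a,\theta f,\ldots,\theta e,b,\theta d,\ldots,\theta c)\overline{P}$. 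In that situation $P_a$ merges the two companion cycles into one cycle containing both $\theta a$ and $\theta b$, and the $P_{a,\theta a}$ step must then \emph{split} that cycle rather than merge two cycles; one has to check that the two resulting cycles are $\theta$-companions of each other. Your description of Case 2 is simply false here, and the case is not covered by your Case 1 either, since $a$ and $b$ are in different cycles of $P$.

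This mixed configuration is not a pathology one can wave away: cycles of $P$ mixing elements of $S$ with elements of $S_\theta$ are exactly what arise from the negative edges of a signed graph (compare the pairs $\Sigma_1'$ and $\Sigma_1''$ used later in the paper, with cycles such as $(b_1\,\theta a_0\,\theta d_0\,\theta c_1)$), so omitting it leaves the proposition unverified in precisely the situation the construction exists to handle. Your deferral of the degenerate branches (empty $c$- or $e$-lists, $a=bP$, etc.) is reasonable for a proposal, but the trichotomy of the paper --- (i) $a,b$ in one cycle with $\theta a,\theta b$ in its companion, (ii) four distinct cycles, (iii) $a$ and $\theta b$ cohabiting --- is the missing structural idea, and your two-way dichotomy on $a$ and $b$ alone cannot be repaired without adding it.
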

\begin{proof}The proof have three cases. If $a$ and $b$ belong to the same orbit $B = (a c\cdots d b e \cdots f)$ of $P,$ then we can suppose that $$P=B(\theta B^{-1})\overline{P}=(a c\cdots d b e \cdots f)(\theta a, \theta{f},\cdots, \theta{e},\theta b, \theta{d},\cdots, \theta{c})\overline{P}$$
By the discussion above, we have $P_{a}=(a c \cdots d)(e \cdots f)(\theta a, \theta{f},\cdots, \theta{e},\theta b, \theta{d},\cdots, \theta{c})\overline{P},$ and $P_{a,\theta a}=(a c \cdots d)(e \cdots f)(\theta{f},\cdots, \theta{e})(\theta a, \theta{d},\cdots, \theta{c})\overline{P}.$ 

If $a$, $b$, $\theta a$ and $\theta b$ belong to four distinct orbits of $P$, then we can suppse that $a$ and $b$ belong to distinct orbits $B_1=(a c \cdots d)$ and $B_2 = (b e\cdots f)$ of $P,$ we let $$P=B_1B_2 (\theta B_1^{-1})(\theta B_2^{-1})\overline{P}=(a c \cdots d)(b e\cdots f)(\theta a, \theta{d},\cdots, \theta{c})(\theta b,\theta{f},\cdots, \theta{e})\overline{P},$$
and we have  $P_{a,\theta a}=(a c \cdots d e \cdots f)(\theta a, \theta{f},\cdots, \theta{e},\theta{d},\cdots, \theta{c})\overline{P}.$

If $a$, $b$, $\theta a$ and $\theta b$ belong to  two distinct orbits of $P,$ by symmetry, we can suppose that $a$ and $\theta b$ belong to orbit $B=(a,c,d,\cdots,\theta b,e,\cdots,f)$ of $P$, then we let $$P= B(\theta B^{-1})\overline{P}=(a, c, \cdots d,\theta b, e,\cdots, f)(\theta a, \theta{f},\cdots, \theta{e},b,\theta{d},\cdots, \theta{c})\overline{P},$$ and again we have $P_{a,\theta a}=(a, c, \cdots d,\theta{f},\cdots, \theta{e})(\theta a,e,\cdots, f,\theta{d},\cdots, \theta{c})\overline{P}.$

\end{proof}
 
 For example, if $(P, \Pi\cup\theta\Pi)= ((1 4  5 7)(2 8 6 3),\ \ \{\{1,3,5,7\},\{2,4,6,8\}\}),$ where $\theta: i\rightarrow i+1,$ for $i=1,3,5,7.$ Then

 \begin{eqnarray*} (P, \Pi\cup\theta\Pi)|_{2\rightarrow 4}^{1\rightarrow 3}&=& \left((1 8 6) (2 5 7),\ \ \{\{1,5,7\},\{2,6,8\}\}\right)\\
      (P, \Pi\cup\theta\Pi)|_{2\rightarrow 6}^{1\rightarrow 5}&=& ((1 4)(7) (2 3)(8),\ \ \{\{1,3,7\},\{2,4,8\}\})\\
      (P, \Pi\cup\theta\Pi)|_{2\rightarrow 8}^{1\rightarrow 7}&=& ((1 4 5 )(2 6 3),\ \ \{\{1,3,5\},\{2,4,6\}\})\\
      (P, \Pi\cup\theta\Pi)|_{4\rightarrow 6}^{3\rightarrow 5}&=& ((3 2 8)(4 7 1),\ \ \{\{1,3,7\},\{2,4,8\}\})\\
      (P, \Pi\cup\theta\Pi)|_{4\rightarrow 8}^{3\rightarrow 7}&=& ((5 3 2) (4 6 1),\ \ \{\{1,3,5\},\{2,4,6\}\})\\
      (P, \Pi\cup\theta\Pi)|_{8\rightarrow 2}^{7\rightarrow 1}&=& ((7) (4 5) (8)(6 3),\ \ \{\{3,5,7\},\{4,6,8\}\})\\
      (P, \Pi\cup\theta\Pi)|_{8\rightarrow 6}^{7\rightarrow 5}&=& ((7 1 4) (8 3 2),\ \ \{\{1,3,7\},\{2,4,8\}\})\\
                             \end{eqnarray*}


Suppose $P$ is a permutation on $S$, we denote $\|P\|$ as the number of cycles in the disjoint cycle decomposition of $P.$ The following two lemmas are analogs of Lemma 1.1 and Lemma 1.2 in \cite{Sta91a}. They play a similar role in calculating region distributions of permutation-bipartition  pairs. They also can be used to tackle some problems for non-orientable embeddings of a graph.

\begin{lemma}\label{lem:S1}   Suppose that $(P, \Pi\cup\theta\Pi)$ is a pair with bits $a\neq b, \theta a\neq \theta b$ and let $(P_{a,\theta a}, \Pi_{a}\cup \theta \Pi_{a}), = (P, \Pi\cup\theta\Pi)|_{\theta a\rightarrow \theta b}^{a\rightarrow b}$. Then the function
	$$f: S(\Pi\cup \theta \Pi, a\rightarrow b,  \theta a\rightarrow \theta b)\rightarrow S(\Pi_{a}\cup \theta \Pi_{a})$$
given by

$$f(Q)=\left(\left[(a b)Q/b\right](\theta a,\theta b)/\theta b\right)=Q_{a,\theta a}$$

is a bijection such that
$$\|PQ\|= \|P_{a,\theta a}Q_{a,\theta a}\|+\delta_{a,bP}+\delta_{\theta aP_a,\theta b},$$ where $\delta$ is the Knonecker delta function.
\end{lemma}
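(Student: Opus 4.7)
The plan is to reduce this lemma to two applications of Stahl's original Lemma~1.1 in \cite{Sta91a}: first to handle the $a\to b$ part of the constraint, and then, after passing to inverses, to handle the residual $\theta a\to \theta b$ part, which the bi-rotation condition translates into a Stahl-type constraint on $Q_a^{-1}$.

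For bijectivity, given $Q\in S(\Pi\cup\theta\Pi,a\to b,\theta a\to \theta b)$, the condition $aQ=b$ together with the bi-rotation property forces $Q$ to have paired cycles $(a,b,c_1,\ldots,c_m)$ and $(\theta c_m,\ldots,\theta c_1,\theta b,\theta a)$ on $\Pi_i$ and $\theta\Pi_i$. A direct computation shows that $(ab)Q/b$ deletes $b$ from the first cycle to give $(a,c_1,\ldots,c_m)$, and then right-multiplying by $(\theta a,\theta b)$ and deleting $\theta b$ turns the paired cycle into $(\theta c_m,\ldots,\theta c_1,\theta a)$. These two cycles are paired under $\theta$ in the reversed sense, so $Q_{a,\theta a}$ is a bi-rotation on $\Pi_a\cup\theta\Pi_a$. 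The inverse map reinserts $b$ immediately after $a$ (opening a new $2$-cycle $(a,b)$ if $a$ is a fixed point) and reinserts $\theta b$ immediately before $\theta a$; one readily checks this inverts $f$.

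For the cycle-count formula I would proceed in two stages. Treating $(P,\Pi\cup\theta\Pi)$ as an ordinary permutation-partition pair, Stahl's Lemma~1.1 applied with constraint $a\to b$ yields
\begin{equation*}
\|PQ\| \= \|P_a Q_a\| \+ \delta_{a,bP}.
\end{equation*}
The bi-rotation hypothesis $\theta bQ=\theta a$ descends to $\theta bQ_a=\theta a$, i.e.\ $\theta aQ_a^{-1}=\theta b$, which is exactly a Stahl-type constraint on $Q_a^{-1}$. Combining the elementary identity $\|P_aQ_a\|=\|P_a^{-1}Q_a^{-1}\|$ (which follows from $(P_aQ_a)^{-1}=Q_a^{-1}P_a^{-1}$ and $\|XY\|=\|YX\|$) with Stahl's Lemma~1.1 applied to the pair $(P_a^{-1},\Pi_a\cup\theta\Pi)$ under constraint $\theta a\to \theta b$ then gives
\begin{equation*}
\|P_a^{-1}Q_a^{-1}\| \= \|(P_a^{-1})_{\theta a}\,(Q_a^{-1})_{\theta a}\| \+ \delta_{\theta a,\theta b P_a^{-1}}.
\end{equation*}

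The remaining work is to identify $(P_a^{-1})_{\theta a}$ and $(Q_a^{-1})_{\theta a}$ with $(P_{a,\theta a})^{-1}$ and $(Q_{a,\theta a})^{-1}$ respectively. The identity for $Q$ is immediate since $(Q_a(\theta a,\theta b))^{-1} = (\theta a,\theta b)Q_a^{-1}$. For $P$ I would do a three-case analysis: inverting $(\theta a,\theta b,\theta bP_a^{-1})P_a/\theta b$ yields $P_a^{-1}(\theta b,\theta a,\theta bP_a^{-1})/\theta b$ (after a cyclic reordering of the $3$-cycle inverse), which is precisely $(P_a^{-1})_{\theta a}$ in Stahl's form; the two degenerate cases match analogously. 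This case analysis is the main obstacle: the asymmetric placement of the $3$-cycle on the left of $P_a$ in the definition of $P_{a,\theta a}$ is exactly what is needed to make the inversion align with the right-placed $3$-cycle on $P_a^{-1}$ coming from Stahl's formula. Once the identities hold, $\|(P_a^{-1})_{\theta a}(Q_a^{-1})_{\theta a}\| = \|P_{a,\theta a}Q_{a,\theta a}\|$ (again using $\|XY\|=\|YX\|$ on inverses), and rewriting $\delta_{\theta a,\theta bP_a^{-1}}=\delta_{\theta aP_a,\theta b}$ produces the claimed formula.
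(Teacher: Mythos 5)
Your proof is correct, and its second half takes a genuinely different route from the paper's. Both arguments begin the same way: a bijectivity check and one application of Stahl's Lemma~1.1 to the constraint $a\to b$, giving $\|PQ\|=\|P_aQ_a\|+\delta_{a,bP}$. For the remaining step the paper does \emph{not} invoke Stahl's lemma again; it instead reinstates $\theta b$, writes $P_a=(\theta bP_a^{-1},\theta b,\theta a)P_{a,\theta a}$ and $Q_a=Q_{a,\theta a}(\theta a,\theta b)$, and compares $\|P_aQ_a\|$ with $\|P_{a,\theta a}Q_{a,\theta a}\|$ by a direct three-case manipulation of the product, using conjugation-invariance of the cycle count. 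You instead pass to inverses, noting that the reversed constraint $\theta bQ=\theta a$ becomes the standard Stahl constraint $\theta aQ_a^{-1}=\theta b$, and apply Lemma~1.1 a second time to the pair $(P_a^{-1},Q_a^{-1})$ together with $\|P_aQ_a\|=\|P_a^{-1}Q_a^{-1}\|$; the price is the three-case verification that $(P_a^{-1})_{\theta a}=(P_{a,\theta a})^{-1}$ and $(Q_a^{-1})_{\theta a}=(Q_{a,\theta a})^{-1}$, which does go through: in the generic case $\bigl((\theta a,\theta b,\theta bP_a^{-1})P_a\bigr)^{-1}=P_a^{-1}(\theta b,\theta a,\theta bP_a^{-1})$, and since $\theta b$ is fixed by both permutations, deletion of $\theta b$ commutes with inversion; the degenerate cases are analogous, and $\delta_{\theta a,\theta bP_a^{-1}}=\delta_{\theta aP_a,\theta b}$. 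The two routes involve comparable case analysis, but yours has a conceptual payoff the paper's direct computation lacks: it explains why $P_{a,\theta a}$ must be defined with the $3$-cycle multiplied on the \emph{left} of $P_a$ (rather than on the right, as in Stahl's $P_a$) --- it is precisely the inverse of Stahl's reduction applied to $P_a^{-1}$.
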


\begin{proof}We first show that the function $f$ is a bijection. Recall that for any $Q\in S(\Pi\cup \theta \Pi)$, $(a b)Q(\theta a,\theta b)$ has $(b)$ and $(\theta b)$ as singletons, and hence $f$ is  injective.  On the other hand,  for any $Q_{a,\theta a}\in S(\Pi_{a}\cup \theta \Pi_{a})$, $(a b)Q_{a,\theta a}(\theta a,\theta b)$ maps $a$ to $b$ and $\theta b$ to $\theta a,$ and we have $$f((a b)Q_{a,\theta a}(\theta a,\theta b))=Q_{a,\theta a}.$$  Thus $f$ is surjective.

Recall that \begin{align*}
P_{a}&=\left\{
                                                        \begin{array}{ll}
                                                         P(babP)/b& \hbox{if $a\neq bP\neq b$  ;} \\
                                                       P(ba)/b , & \hbox{if $a=bP\neq b$;}\\
                                                       P/b , & \hbox{if $bP= b$}.
\end{array}
                                                      \right.
\end{align*}
and
\begin{align*}
P_{a,\theta a}&=\left\{
                                                        \begin{array}{ll}
                                                   (\theta a,\theta b, \theta b P_a^{-1})P_a/\theta b & \hbox{if $\theta a\neq \theta b P_a^{-1}\neq \theta b$ ;} \\
                                                            (\theta b,\theta a )P_a/\theta b, & \hbox{if  $\theta a= \theta b P_a^{-1} \neq \theta b$;}\\
                                                            P_a/\theta b, & \hbox{if  $\theta b P_a^{-1} = \theta b.$}
                                                        \end{array}
                                                      \right.
\end{align*}

   By Lemma 1.1 in \cite{Sta91a}, we have $$\|PQ\|=\|P_{a}Q_{a}\|+\delta_{a,bP}.$$ If $\theta a\neq \theta bP_a^{-1}\neq \theta b,$ note that $b$ ($\theta b$) is in the domain of neither $P_a$($P_{a,\theta a}$) nor $Q_a$ ($Q_{a,\theta a}$), then
   \begin{eqnarray*} \|PQ\|&=& \|P_{a}Q_{a}\|+\delta_{a,bP}\\
      &=& \|(\theta bP_{a}^{-1},\theta b, \theta a)P_{a,\theta a}Q_{a,\theta a}(\theta a,\theta b)\|+\delta_{a,bP}\\
      &=&\|(\theta b, \theta aP_{a}^{-1})P_{a,\theta a}Q_{a,\theta a}\|+\delta_{a,bP}\\
      &=&\|P_{a,\theta a}Q_{a,\theta a}\|+\delta_{a,bP}.
      \end{eqnarray*}
  Furthermore, if $\theta a= \theta bP_a^{-1}\neq \theta b,$ then \begin{eqnarray*} \|PQ\|&=&\|P_{a}Q_{a}\|+\delta_{a,bP}\\
      &=& \|(\theta b, \theta a)P_{a,\theta a}Q_{a,\theta a}(\theta a,\theta b)\|+\delta_{a,bP}\\
      &=&\|(\theta b)P_{a,\theta a}Q_{a,\theta a}\|+\delta_{a,bP}\\
      &=&\|P_{a,\theta a}Q_{a,\theta a}\|+\delta_{a,bP}+1.
            \end{eqnarray*}
  If  $\theta b P_a^{-1} = \theta b,$ we have \begin{eqnarray*} \|PQ\|&=&\|P_{a}Q_{a}\|+\delta_{a,bP}\\
      &=& \|(\theta b)P_{a,\theta a}Q_{a,\theta a}(\theta a,\theta b)\|+\delta_{a,bP}\\
      &=&\|(\theta a,\theta b)P_{a,\theta a}Q_{a,\theta a}\|+\delta_{a,bP}\\
      &=&\|P_{a,\theta a}Q_{a,\theta a}\|+\delta_{a,bP}.
      \end{eqnarray*}

We have the desired result.
\end{proof}

\begin{lemma}\label{lem:S2}    Suppose that $(P, \Pi\cup\theta\Pi)$ is a pair such that $\{b\}\in \Pi,$ $\{\theta b\}\in \theta \Pi,$  and let
 $(P_{b,\theta b}, \Pi_{b}\cup \theta \Pi_{b}), = (P, \Pi\cup\theta\Pi)|_{\theta b\rightarrow \theta b}^{b\rightarrow b}.$ Then the function

	$$f: S(\Pi\cup \theta \Pi, b\rightarrow b,  \theta b\rightarrow \theta b)\rightarrow S(\Pi_{b}\cup \theta \Pi_{b})$$
given by

$$f(Q)=\left[Q/b\right]/\theta b=Q_{b,\theta b}$$
is a bijection such that
$$\|PQ\|= \|P_{b,\theta b}Q_{b,\theta b}\|+\delta_{b,bP}+\delta_{\theta bP_b,\theta b}.
$$
\end{lemma}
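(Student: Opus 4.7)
The plan is to mirror the proof of Lemma \ref{lem:S1}, but specialize the general constraint $a\to b$ to the degenerate singleton constraint $b\to b$ (and similarly $\theta b\to\theta b$). The argument splits into verifying the bijectivity of $f$ and then the cycle-count identity.

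For bijectivity, observe that because $\{b\}\in\Pi$ and $\{\theta b\}\in\theta\Pi$ are singleton partition classes, every bi-rotation $Q\in S(\Pi\cup\theta\Pi)$ must contain $(b)$ and $(\theta b)$ as singleton cycles (these are the only cyclic orders on a one-element set). Hence the constraints $bQ=b$ and $\theta b\,Q=\theta b$ are automatically satisfied, and $f$ simply strips the two fixed points out of the cycle decomposition. The inverse map re-introduces $(b)$ and $(\theta b)$ as singleton cycles of any bi-rotation on $\Pi_b\cup\theta\Pi_b$, so $f$ is a bijection.

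For the cycle-count identity, the plan is to invoke Lemma 1.2 of Stahl \cite{Sta91a} (the singleton analog of the lemma used in the proof of Lemma \ref{lem:S1}) twice: once to remove $b$, once to remove $\theta b$. The first application, to $(P,Q)$ with trivial constraint $b\to b$, gives $\|PQ\|=\|P_bQ_b\|+\delta_{b,bP}$, where $P_b$ and $Q_b$ are obtained by deleting $b$ from the relevant cycles. Since $\{\theta b\}$ remains a singleton in $\theta\Pi_b$ and $\theta b\,Q_b=\theta b$, a second application to $(P_b,Q_b)$ with trivial constraint $\theta b\to\theta b$ gives $\|P_bQ_b\|=\|P_{b,\theta b}Q_{b,\theta b}\|+\delta_{\theta b P_b,\theta b}$. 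Summing the two identities yields the claimed formula.

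The main point requiring care is to reconcile the recursive formula for $P_{a,\theta a}$ given earlier in the paper with the plain cycle-deletion operation needed in the second application. In the present situation $a=b$, and hence $\theta a=\theta b$, so the first two branches of that defining formula become degenerate (the inserted transposition or 3-cycle would have a repeated symbol), and only the third branch $P_{b,\theta b}=P_b/\theta b$ is legitimate. This coincides precisely with the deletion used by Stahl's singleton lemma, so the two applications compose cleanly and the combined Kronecker-delta correction is exactly $\delta_{b,bP}+\delta_{\theta bP_b,\theta b}$, as required.
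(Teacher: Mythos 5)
Your proof is correct and follows essentially the same route as the paper: both establish bijectivity by noting that the singleton classes $\{b\}$ and $\{\theta b\}$ force $(b)$ and $(\theta b)$ to be fixed cycles of every $Q$, and both reduce the cycle-count identity to Stahl's Lemma 1.2. The only cosmetic difference is that the paper invokes Stahl's lemma once and then verifies the $\theta b$-deletion by a direct two-case computation (using the first branch of the defining formula with the degenerate $3$-cycle read as the transposition $(\theta b,\theta b P_b^{-1})$, rather than only the third branch as you suggest), whereas you invoke Stahl's lemma a second time; the net operation---deleting $\theta b$ from its cycle in $P_b$---is the same either way, so the two arguments coincide in substance.
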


\begin{proof}It is quite obvious that the function $f$ is  a bijection, since $Q_{b,\theta b}$ is obtained from $Q$ by deleteing the orbits $(b),$ and $(\theta b).$  By Lemma 1.2 in \cite{Sta91a}, $$\|PQ\|=\|P_{b}Q_{b}\|+\delta_{b,bP}.$$

If $\theta b \neq \theta bP_b^{-1}$, then $P_{b,\theta b}=(\theta b,\theta b P_b^{-1})P_b/\theta b = P_{b,\theta b}/\theta b,$ where $(\theta b)$ is an orbit of $P_{b,\theta b}$. Since $\theta b$ is in the domain of neither $P_{b,\theta b}$ nor $Q_{b,\theta b}$, in this case

\begin{eqnarray*} \|PQ\|&=& \|P_{b}Q_{b}\|+\delta_{b,bP}\\
      &=& \|(\theta b,\theta b P_b^{-1})P_{b,\theta b}Q_{b,\theta b}(\theta b)\|+\delta_{b,bP}\\
      &=&\|P_{b,\theta b}Q_{b,\theta b}\|+\delta_{b,bP}.
          \end{eqnarray*}
Otherwise  $\theta b \neq \theta bP_b^{-1}$, in this case $(\theta b)$ is an orbit of both $P_b$ and $Q_b$, and it's obvious that
\begin{eqnarray*} \|PQ\|&=& \|P_{b}Q_{b}\|+\delta_{b,bP}\\
      &=& \|(\theta b)P_{b,\theta b}Q_{b,\theta b}(\theta b)\|+\delta_{b,bP}\\
      &=&\|P_{b,\theta b}Q_{b,\theta b}\|+\delta_{b,bP}+1.
      \end{eqnarray*}
\end{proof}

An Euler digraph is a digraph where every vertex has in-degree equal to its out-degree. Let $Q$ be a bi-rotation of the pair $(P, \Pi\cup\theta\Pi).$  We associate with $(P,Q)$ an embedding of 4-regular Euler digraph $D(P, Q)$ whose vertices are the elements of $\textbf{S}.$ The arbitrary
vertex $u$ of $D(P, Q)$ has two arcs emanating from it, one going to $vP$ and the other to $uQ.$ Furthermore, the rotation of $v$ is $(vP^{-1},vP ,vQ^{-1}, vQ).$ It has $|\textbf{S}|$ vertices, $2|\textbf{S}|$ arcs, and $||P||+||Q||+||PQ||$ regions. From Euler's formula, it's Euler characteristic equals $$|\textbf{S}|-2|\textbf{S}|+||P||+||Q||+||PQ||$$

Since $|\textbf{S}|$, $||P||$, $||Q||$ and Euler characteristic are all even numbers, it follows that $||PQ||$ is also an even number.

For example, Suppose the pair  $(P, \Pi\cup\theta\Pi)=((1,\theta 2,3)(\theta 1,\theta 3,2 ), \{\{1,2,3\}\cup \{\theta 1,\theta 2, \theta 3\}).$ Let $P=(1,\theta 2,3)(\theta 1,\theta 3,2 ),\ Q=(1,2,3)(\theta 1,\theta 3,\theta 2),$ then an embedding of $D(P,Q)$ is shown in Figure \ref{digraph}. It has $6$ vertices, $12$ arcs, and $6$ regions. The number of connected components of $D(P, Q)$ is $1.$
 \begin{figure}[h]
\centering
\includegraphics[width=8cm]{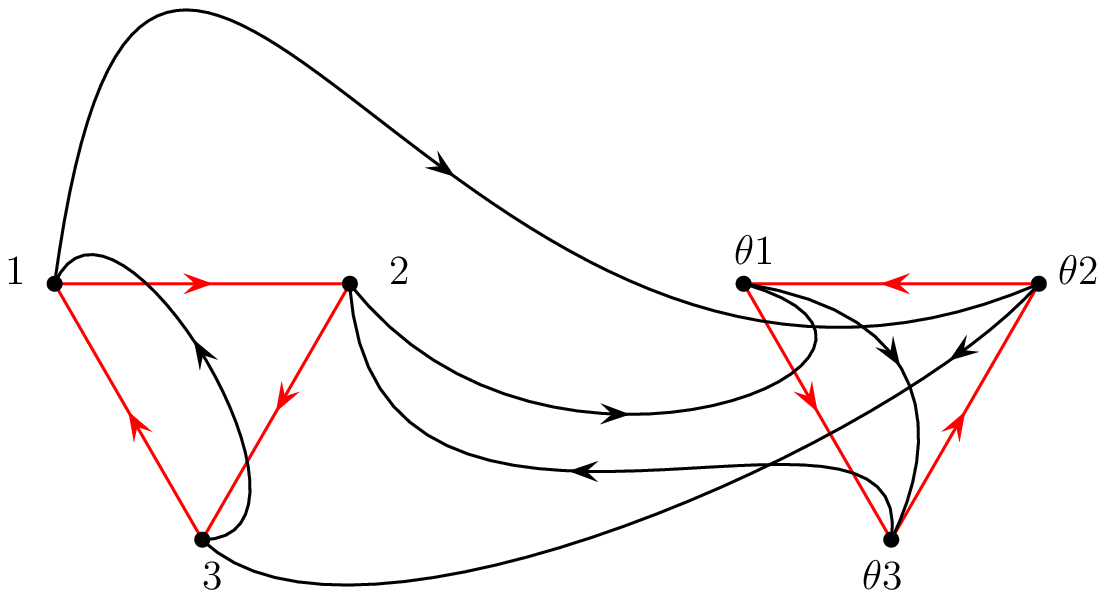}
\caption{}\label{digraph}
\end{figure}

If the pair $(P, \Pi\cup\theta\Pi)$ is $((1,2,3)(\theta 1,\theta 3,\theta 2 ), \{\{1,2,3\}\cup \{\theta 1,\theta 2, \theta 3\}).$ Let $P=Q=(1,2,3)(\theta 1,\theta 3,\theta 2),$ then the number of connected components of $D(P, Q)$ is $2$ as shown in Figure \ref{digraph:BB3}. The corresponding embedding of $D(P,Q)$ has $6$ vertices, $12$ arcs, and $6$ regions. 

\begin{figure}[h]
\centering
\includegraphics[width=4cm]{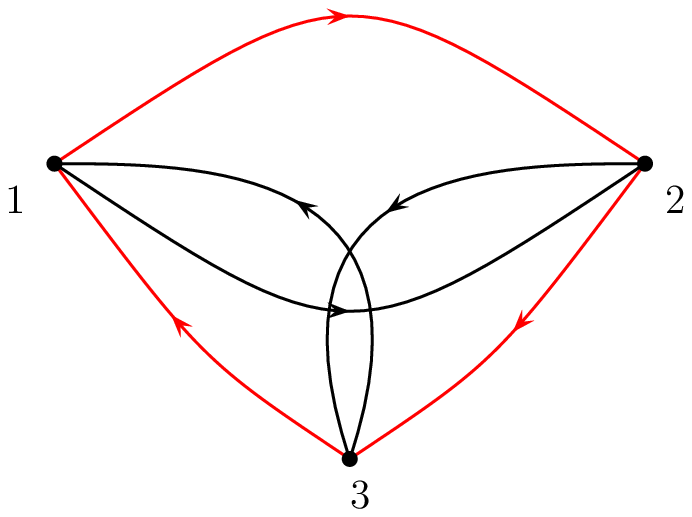}
\includegraphics[width=4.2cm]{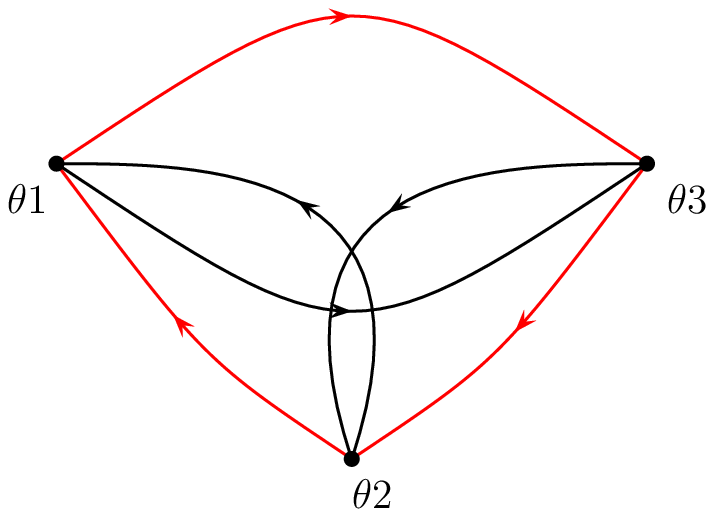}
\caption{}\label{digraph:BB3}
\end{figure}

The \textit{orbit distribution} of the pair $(P, \Pi\cup\theta\Pi)$ is define as $r_{(P, \Pi\cup\theta\Pi)}(2k)=\{Q\in S(\Pi\cup\theta\Pi):\ \|PQ\|=2k\}.$
\begin{cor} \label{cor:1}If  $(P, \Pi\cup\theta\Pi)$ is a permutation-bipartition pair such that $b\in S,$ $\{b\}\notin \Pi $ and $\theta b\in S_{\theta},$ $\{\theta b\}\notin \theta\Pi $
 then
	$$r_{(P, \Pi\cup\theta\Pi)}(2k)= \dsum_{\begin{array}{c}
a\equiv b (mod\ \Pi_i), \ a\neq b\\
\theta a\equiv \theta b (mod\ \theta\Pi_i), \ \theta a\neq \theta b
\end{array}} r_{(P, \Pi\cup\theta\Pi)|_{\theta a\rightarrow \theta b}^{a\rightarrow b}}(2k-\delta_{a,bP}-\delta_{\theta aP_a,\theta b}).$$

\end{cor}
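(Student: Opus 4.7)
The plan is to obtain the identity via a straightforward partition of $S(\Pi\cup\theta\Pi)$ by the preimage of $b$ under $Q$, and then to apply Lemma \ref{lem:S1} termwise.

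First I would observe that for any bi-rotation $Q\in S(\Pi\cup\theta\Pi)$, if $\Pi_i$ denotes the block of $\Pi$ containing $b$, then the hypothesis $\{b\}\notin\Pi$ forces $|\Pi_i|\ge 2$, so the cycle of $Q$ on $\Pi_i$ is nontrivial and there is a unique element $a\in\Pi_i$ with $aQ=b$; moreover $a\ne b$. By the very definition of a bi-rotation, the cycle of $Q$ on $\theta\Pi_i$ is the reverse of its cycle on $\Pi_i$, so $aQ=b$ is rigidly coupled with $\theta b\,Q=\theta a$, and $\theta a\ne\theta b$ follows from $a\ne b$ since $\theta$ is a bijection. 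In particular every $Q\in S(\Pi\cup\theta\Pi)$ belongs to exactly one of the sets $S(\Pi\cup\theta\Pi,\,a\to b,\,\theta a\to\theta b)$, as $a$ ranges over the elements of $\Pi_i\smallsetminus\{b\}$.

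This gives the disjoint decomposition
$$S(\Pi\cup\theta\Pi)=\bigsqcup_{\substack{a\equiv b\ (\mathrm{mod}\ \Pi),\ a\ne b\\ \theta a\equiv \theta b\ (\mathrm{mod}\ \theta\Pi),\ \theta a\ne\theta b}} S(\Pi\cup\theta\Pi,\,a\to b,\,\theta a\to\theta b),$$
where the condition on the $\theta$-side is automatic from the condition on the $\Pi$-side but is written out to match the constraint-set notation used before Lemma \ref{lem:S1}.

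Next I would invoke Lemma \ref{lem:S1} on each summand. That lemma provides, for each admissible $a$, a bijection $f\colon S(\Pi\cup\theta\Pi,\,a\to b,\,\theta a\to\theta b)\to S(\Pi_a\cup\theta\Pi_a)$ such that
$$\|PQ\|=\|P_{a,\theta a}Q_{a,\theta a}\|+\delta_{a,bP}+\delta_{\theta aP_a,\theta b}.$$
Restricting $f$ to the fibre $\{Q:\|PQ\|=2k\}$ yields a bijection with $\{Q_{a,\theta a}:\|P_{a,\theta a}Q_{a,\theta a}\|=2k-\delta_{a,bP}-\delta_{\theta aP_a,\theta b}\}$, whose cardinality is exactly $r_{(P,\Pi\cup\theta\Pi)|_{\theta a\to\theta b}^{a\to b}}(2k-\delta_{a,bP}-\delta_{\theta aP_a,\theta b})$ by definition of the orbit distribution. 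Summing these cardinalities over $a$ and using the partition above gives the formula.

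The only real thing that needs care is the disjointness and completeness of the partition in the first step; everything quantitative is already encapsulated in Lemma \ref{lem:S1}. Because the bi-rotation structure at the single vertex $\Pi_i\cup\theta\Pi_i$ forces the two cycles to determine each other, the unique $a$ with $aQ=b$ simultaneously fixes $\theta a$ via $\theta b\,Q=\theta a$, so no double counting occurs and no bi-rotation is missed.
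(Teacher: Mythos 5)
Your proof is correct and follows exactly the route the paper intends: the paper states this corollary without an explicit proof, as an immediate consequence of Lemma \ref{lem:S1}, and your partition of $S(\Pi\cup\theta\Pi)$ according to the unique $a=bQ^{-1}$ (with the $\theta$-side constraint automatic from the reversed-cycle structure of a bi-rotation) followed by a termwise application of the lemma is precisely that argument. Nothing is missing.
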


\begin{cor} \label{cor:2}If  $(P, \Pi\cup\theta\Pi)$ is a permutation-bipartition pair such that $\{b\}\in \Pi, \{\theta b\}\in \theta\Pi ,$
 then
	$$r_{(P, \Pi\cup\theta\Pi)}(2k)=  r_{(P, \Pi\cup\theta\Pi)|_{\theta b\rightarrow \theta b}^{b\rightarrow b}}(2k-\delta_{b,bP}-\delta_{\theta b,\theta bP_b}).$$
\end{cor}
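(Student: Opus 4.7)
The plan is to deduce Corollary \ref{cor:2} directly from Lemma \ref{lem:S2} by observing that the singleton hypothesis makes the constraint $\{b \to b,\ \theta b \to \theta b\}$ vacuous. First I would note that when $\{b\} \in \Pi$ and $\{\theta b\} \in \theta\Pi$, the bi-rotation at this vertex is forced: the only cyclic ordering of a one-element set leaves that element fixed. Hence every $Q \in S(\Pi \cup \theta\Pi)$ automatically satisfies $bQ = b$ and $\theta b\, Q = \theta b$, so that
\[
S(\Pi \cup \theta\Pi,\ b \to b,\ \theta b \to \theta b) \;=\; S(\Pi \cup \theta\Pi).
\]
This means the hypotheses of Lemma \ref{lem:S2} apply with no restriction on $Q$, and the bijection supplied there already ranges over all bi-rotations of $(P, \Pi \cup \theta\Pi)$.

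Next I would invoke Lemma \ref{lem:S2} on this unrestricted set. The lemma produces a bijection
\[
f : S(\Pi \cup \theta\Pi) \longrightarrow S(\Pi_b \cup \theta\Pi_b), \qquad f(Q) = Q_{b,\theta b},
\]
together with the cycle-count identity $\|PQ\| = \|P_{b,\theta b}\,Q_{b,\theta b}\| + \delta_{b,bP} + \delta_{\theta b P_b,\theta b}$. Because the correction term depends only on the fixed data $(P, b)$ and not on $Q$, the equation $\|PQ\| = 2k$ is equivalent to $\|P_{b,\theta b}\,Q_{b,\theta b}\| = 2k - \delta_{b,bP} - \delta_{\theta b P_b,\theta b}$ under the bijection $f$. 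Counting both sides and using the symmetry $\delta_{\theta b P_b,\theta b} = \delta_{\theta b,\theta b P_b}$ to align with the notation of the corollary gives exactly the claimed identity
\[
r_{(P,\Pi \cup \theta\Pi)}(2k) \;=\; r_{(P,\Pi \cup \theta\Pi)\,|_{\theta b \to \theta b}^{b \to b}}\bigl(2k - \delta_{b,bP} - \delta_{\theta b,\theta b P_b}\bigr).
\]

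There is no substantial obstacle here: Corollary \ref{cor:2} is a direct repackaging of Lemma \ref{lem:S2} in the language of orbit distributions. The only subtle point is the initial observation that the singleton hypothesis trivializes the constraint, which is what allows the lemma to be applied globally on $S(\Pi \cup \theta\Pi)$ rather than on a proper subset; once that is in place, the rest of the argument is a bookkeeping step using the bijection and the cycle-count identity.
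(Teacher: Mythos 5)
Your proposal is correct and matches the paper's (implicit) derivation: the paper states Corollary \ref{cor:2} without a separate proof precisely because it is the counting restatement of Lemma \ref{lem:S2}, using the fact that the singleton blocks $\{b\}$ and $\{\theta b\}$ force $bQ=b$ and $\theta b\,Q=\theta b$ for every bi-rotation $Q$, so the constrained set coincides with all of $S(\Pi\cup\theta\Pi)$ and the bijection transports the fibre of $\|PQ\|=2k$ onto the fibre of $\|P_{b,\theta b}Q_{b,\theta b}\|=2k-\delta_{b,bP}-\delta_{\theta b,\theta bP_b}$. Nothing is missing.
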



Note that such directed embeddings of Euler digraphs (Tutte called them plane alternating dimaps) were studied  by Tutte \cite{Tut48} in 1948.  Bonnington et al. \cite{BCMM02} made a  systematic study of {directed embeddings} of an Eulerian digraph into surfaces.

\subsection{A region Walkup reduction}\label{region}
The two corollaries above enable us to expresses the regions (or faces) of the embeddings of a pair $(P, \Pi\cup\theta\Pi)$ in terms of those of smaller pairs. It therefore makes possible inductive proofs for signed graph embeddings. We can associate with each pair $(P, \Pi\cup\theta\Pi)$  a \textit{region reduction diagram} $\mathfrak{F}_{(P, \Pi\cup\theta\Pi)}$ with the edge label $\delta_{a,bP}+\delta_{\theta aP_a,\theta b}$ which is similar to the reduction diagram of \cite{Sta91a} (Called Walkup reduction in \cite{Sta91a} by Stahl).

Let $s= (b_1\prec  b_2  \prec\cdots\prec b_n)$ be any linear ordering of the set $\Pi$ that underlies the pair $(P, \Pi\cup\theta\Pi)$ and let $\theta s=(\theta b_1\prec \theta b_2 \prec\cdots\prec \theta b_n)$ be the linear ordering of the set $\theta\Pi.$ Let $\mathfrak{F}_{(P, \Pi\cup\theta\Pi)}^{0}$ be $(P, \Pi\cup\theta\Pi),$ and assuming that the vertex set $\mathfrak{F}_{(P, \Pi\cup\theta\Pi)}^{i}$ was given, for $0\leq i< n.$ Let $(P^{'},  \cup\theta\Pi^{'})$ be any vertex in $\mathfrak{F}_{(P, \Pi\cup\theta\Pi)}^{i}.$ If $\{b_{i+1}\}$ and $\{\theta b_{i+1}\}$ are  singleton members of $\Pi^{'}$ and $\theta\Pi^{'},$ respectively. Then $(P^{'},  \Pi^{'}\cup\theta\Pi^{'})$ has only one descendent $(P^{'}, \Pi^{'} \cup\theta\Pi^{'})|_{\theta \theta b\rightarrow \theta b}^{b\rightarrow b};$ Otherwise $\{b_{i+1}\}$ and $\{\theta b_{i+1}\}$ are not singleton members of $\Pi^{'}$ and $\theta\Pi^{'},$ respectively, then each of the pairs $(P^{'},  \Pi^{'}\cup\theta\Pi^{'})|_{\theta a\rightarrow \theta b_{i+1}}^{a\rightarrow b_{i+1}}$ is a descendent of $(P^{'},  \Pi^{'}\cup\theta\Pi^{'})$, for  $a\equiv b (mod\ \Pi_i),$ and $\theta a\equiv \theta b (mod\ \theta\Pi_i)$. Each branch from  $(P^{'},  \Pi^{'}\cup\theta\Pi^{'})$ to any of its descendent  $(P^{'},  \Pi^{'}\cup\theta\Pi^{'})|_{\theta a\rightarrow \theta b_{i+1}}^{a\rightarrow b_{i+1}}$ is assigned the weights $\delta_{a,bP}+\delta_{\theta aP_a,\theta b}$.

The following theorem follows from Corollary \ref{cor:1}  and Corollary \ref{cor:2}.

\begin{thm}\label{region}
The embeddings of the permutation-bipartition pair $(P, \Pi\cup\theta\Pi)$ are in a one-to-one corresponding with the directed path of $\mathfrak{F}_{(P, \Pi\cup\theta\Pi)}.$ This correspondence is such that the sum of the weights along the corresponding path is twice the number of regions of the embedding $(P, \Pi\cup\theta\Pi).$
\end{thm}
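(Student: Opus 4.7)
The plan is to argue by induction on $n$, the length of the linear ordering $s = (b_1 \prec \cdots \prec b_n)$. The base case $n=0$ is vacuous: the diagram is a single vertex and the only embedding $Q$ of the empty pair satisfies $\|PQ\|=0$, matching a path of zero weight. For the inductive step I would focus on the first level of $\mathfrak{F}_{(P, \Pi\cup\theta\Pi)}$, i.e.\ the passage from the root to its descendants, and then plug in the inductive hypothesis applied to each of those descendants.

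At the root, I split the set $S(\Pi \cup \theta\Pi)$ of bi-rotations according to the image of $b_1$ under $Q$. If $\{b_1\}$ is a singleton in $\Pi$ (so also $\{\theta b_1\}\in\theta\Pi$), there is only one piece and Lemma \ref{lem:S2} supplies a bijection onto $S(\Pi_{b_1}\cup\theta\Pi_{b_1})$ together with the identity $\|PQ\|=\|P_{b_1,\theta b_1}Q_{b_1,\theta b_1}\|+\delta_{b_1,b_1P}+\delta_{\theta b_1,\theta b_1P_{b_1}}$. Otherwise, $S(\Pi\cup\theta\Pi)$ is the disjoint union of $S(\Pi\cup\theta\Pi,\,a\to b_1,\,\theta a\to\theta b_1)$ over all $a\equiv b_1\pmod\Pi$ with $a\neq b_1$, and Lemma \ref{lem:S1} gives, for each such $a$, a bijection onto $S(\Pi_a\cup\theta\Pi_a)$ and the identity $\|PQ\|=\|P_{a,\theta a}Q_{a,\theta a}\|+\delta_{a,b_1P}+\delta_{\theta aP_a,\theta b_1}$. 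In either case, every $Q$ determines a unique edge out of the root of $\mathfrak{F}_{(P,\Pi\cup\theta\Pi)}$, labelled by precisely the weight appearing in the formula for $\|PQ\|$.

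Now apply the inductive hypothesis to the descendant $(P',\Pi'\cup\theta\Pi')$ chosen by $Q$, whose region reduction diagram (built from the tail ordering $b_2\prec\cdots\prec b_n$) is by construction the sub-diagram of $\mathfrak{F}_{(P,\Pi\cup\theta\Pi)}$ rooted at $(P',\Pi'\cup\theta\Pi')$. The embeddings of the smaller pair biject with root-to-leaf paths of this sub-diagram, with sub-path weight sum equal to $\|P'Q'\|$. Concatenating the root edge with the sub-path produces a root-to-leaf path of $\mathfrak{F}_{(P,\Pi\cup\theta\Pi)}$ of total weight $\|P'Q'\|$ plus the root-edge weight, which by the Lemma identity equals $\|PQ\|$. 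Since $\|PQ\|=2k$ means $PQ$ has $2k$ cycles, which come in $\theta$-paired orbits and hence correspond to $k$ regions of the embedding, this total weight is exactly twice the number of regions, as claimed. Summing over all $Q$ recovers Corollaries \ref{cor:1} and \ref{cor:2} at the first level and propagates the bijection at all deeper levels.

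The main obstacle is the bookkeeping at the inductive step: one must verify that the reduction diagram of a descendant really is the sub-diagram rooted at that descendant, which requires checking that each $b_{i+1}$ for $i\geq 1$ survives (possibly as a singleton) under the chain of reductions performed earlier, and that the bijections of Lemmas \ref{lem:S1} and \ref{lem:S2} compose cleanly so that the map from embeddings to paths is both well-defined and invertible. Neither point is deep, but both must be checked carefully so that the telescoping of edge-weight identities $\|PQ\|-\|P_{a,\theta a}Q_{a,\theta a}\|=\delta_{a,bP}+\delta_{\theta aP_a,\theta b}$ down the path produces exactly $\|PQ\|$ at the root.
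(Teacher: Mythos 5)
Your proposal is correct and follows essentially the same route as the paper: the paper simply asserts that the theorem follows from Corollaries \ref{cor:1} and \ref{cor:2}, and your induction on the linear ordering, telescoping the one-step identities of Lemmas \ref{lem:S1} and \ref{lem:S2} down each root-to-leaf path, is exactly the argument those corollaries encode. The only detail worth keeping explicit is the one you already note at the end, that the total weight equals $\|PQ\|$, which is twice the region count because the cycles of $PQ$ occur in $\theta$-paired orbits.
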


\begin{figure}[h]
\centering
\includegraphics[width=8cm]{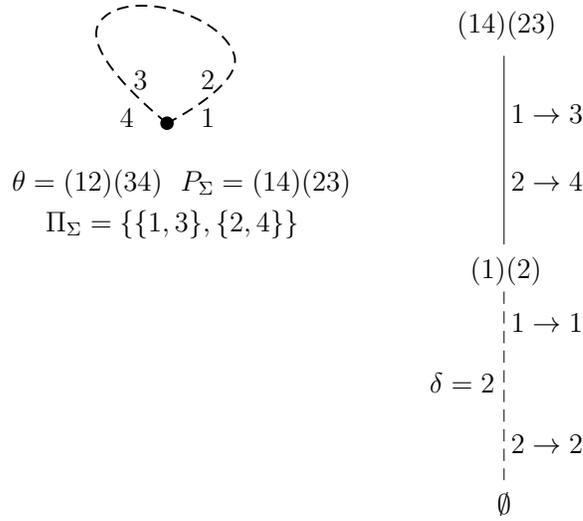}
\caption{A twisted loop and its reduction diagram.}\label{fig:reB1}
\end{figure}

Figure \ref{fig:reB1} shows a signed graph $\Sigma$ of one vertex with a negative edge (namely, $\Sigma$ is a twisted loop). By the reduction diagram, we know  $\Sigma$ embeds on projective plane with one region.
 \begin{figure}[h]
\centering
\includegraphics[width=8cm]{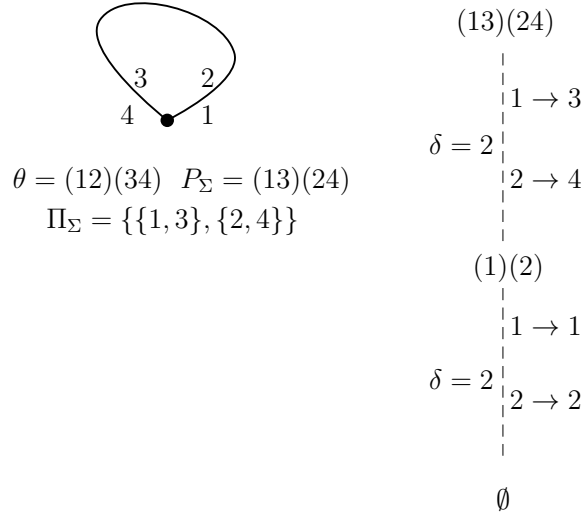}
\caption{An oriented loop and its reduction diagram.}\label{fig:reB1-o}
\end{figure}

 If we change the sign of $\Sigma$  in Figure \ref{fig:reB1},  then $\Sigma$ will be change to a single loop. The reduction diagram of Figure \ref{fig:reB1-o} shows that it embeds on the plane with two regions.

\begin{figure}[h]
\centering
\includegraphics[width=4.5cm]{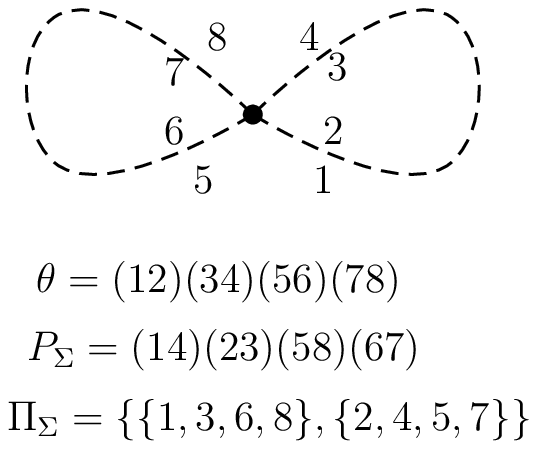}
\includegraphics[width=11cm]{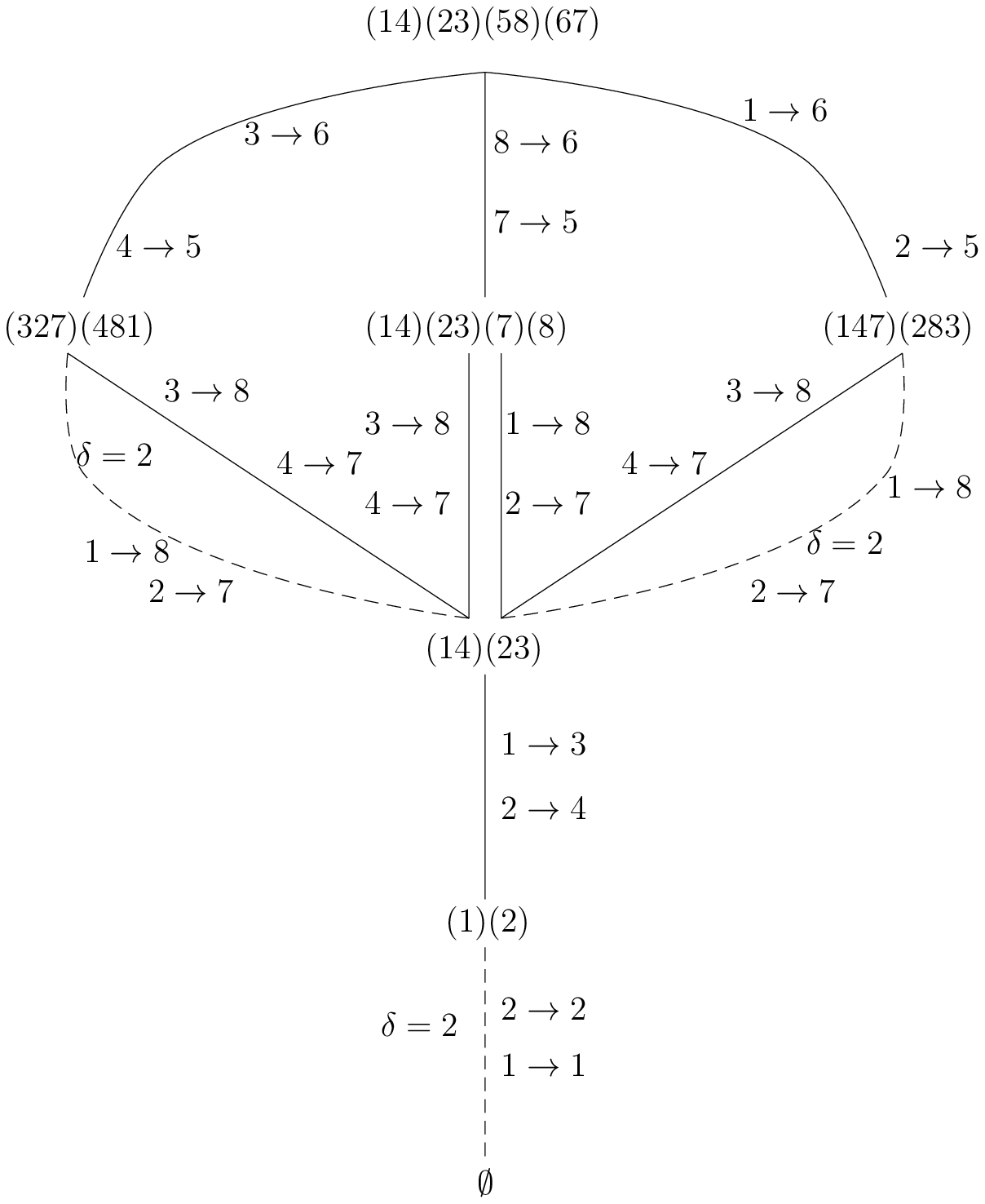}
\caption{A signed graph $\Sigma$ with one vertex and two twisted loops, and its reduction diagram.}\label{fig:reB2C}
\end{figure}

 Again, let $\Sigma$ be the signed graph of Figure \ref{fig:reB2}, it contains one vertex with two negative edges. The reduction diagram shows that it has two embeddings on projective plane with two regions, and four embeddings on Klein bottle with one region. The signed graph in Figure \ref{fig:reB2C} differs from the signed graph in Figure \ref{fig:reB2} in that it contains a negative edge and a positive edge, it has four embeddings on projective plane with two regions, and two embeddings on Klein bottle with one region.

\begin{figure}[h]
\centering
\includegraphics[width=4.5cm]{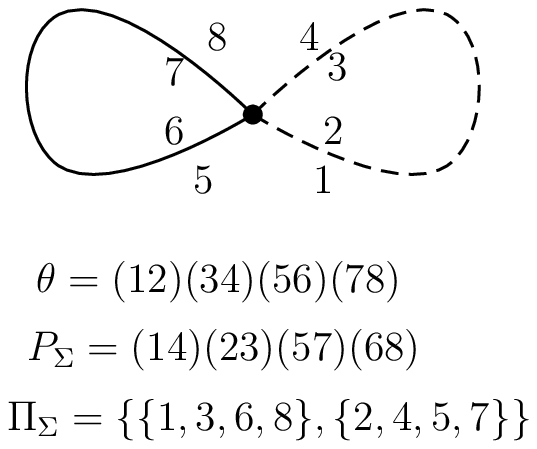}
\includegraphics[width=11cm]{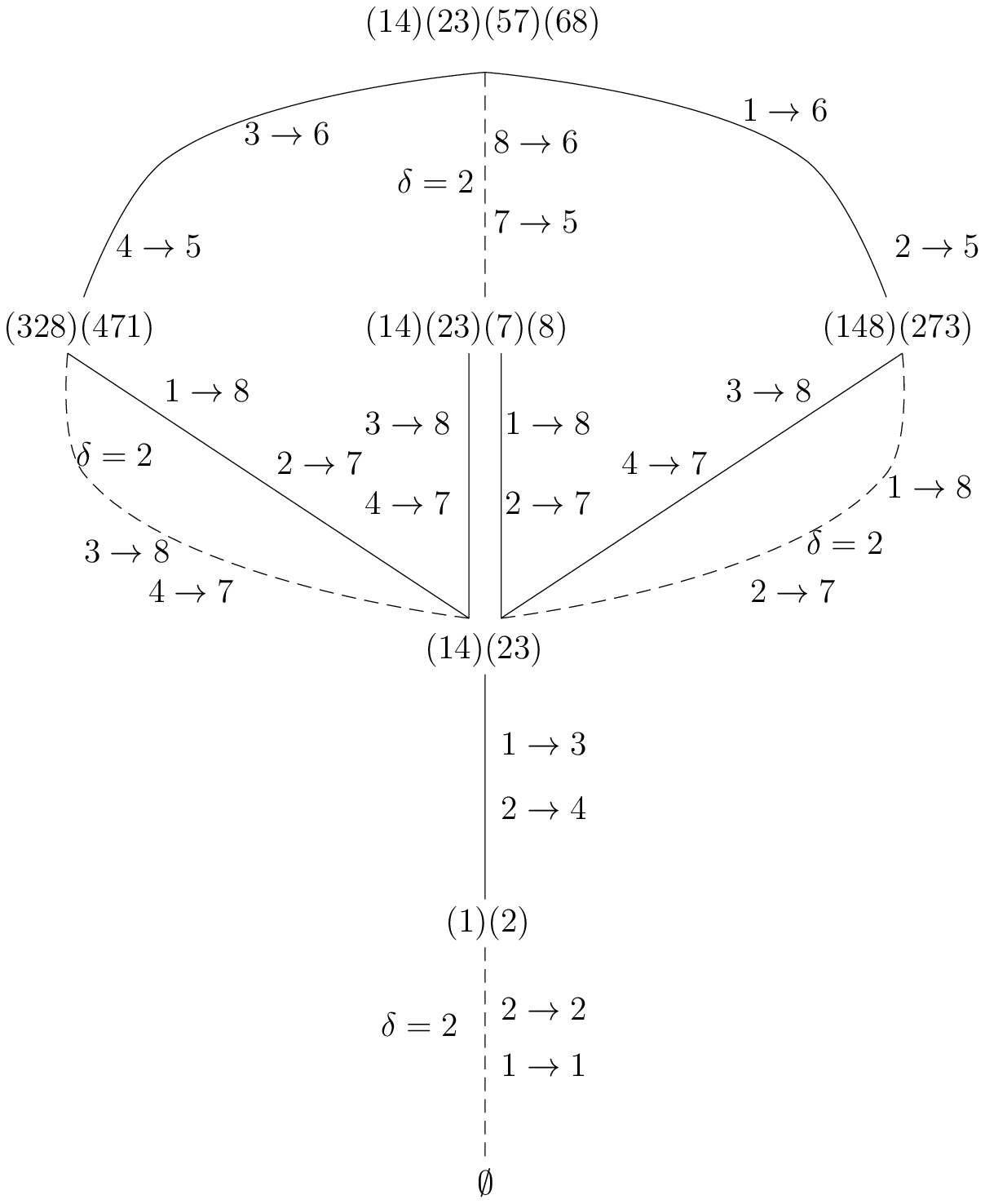}
\caption{A signed graph $\Sigma$ with one vertex, a loop and a twisted loop, and its reduction diagram.}\label{fig:reB2}
\end{figure}


\subsection{The genus version of Walkup reduction}\label{region} We call a permutation-bipartition pair $(P, \Pi\cup\theta\Pi)$  \textit{non-orientable}, if there exits a permutation $Q_i$ on $\Pi_i,$ and a permutation $Q_j$ on $\theta \Pi_j,$ such that the action of $\langle P,Q_i,Q_j\rangle$ on $\Pi_i\cup\theta \Pi_j$ is transitive for $1\leq i,j\leq k$. Otherwise, the pair $(P, \Pi\cup\theta\Pi)$ is an \textit{orientable permutation-bipartition pair}. The\textit{ Euler characteristic} of the embedding $(P,Q)$ of the pair $(P, \Pi\cup\theta\Pi)$ is $$\chi(P, Q)= \frac{1}{2}\left(\|P\|+\|Q\|+\|PQ\|-2n\right),$$ where $2n$ is the number of elements in $S\cup S_{\theta}.$

We denote $c(P,Q)$ as the number of \textit{ orbits }that the group generated by $P$ and $Q.$ It is easy to see that if $a$ and $b$ are distinct and $a\equiv b (mod\ \Pi_i),$ then $$c(P,Q)+1\geq c(P_a,Q_a)\geq c(P,Q)$$ Moreover, if $a$ and $b$ belong to distinct cycles of $P$ or $a=bP,$  then $c(P_a,Q_a)= c(P,Q).$ Similarly, if $\theta a$ and $\theta b$ are distinct and $\theta a\equiv \theta b (mod\ \theta\Pi_i),$ then $$c(P_a,Q_a)+1\geq c(P_{a,\theta a},Q_{a,\theta a})\geq c(P_a,Q_a).$$ Again, if $\theta a$ and $\theta b$ belong to distinct cycles of $P_a$ or $\theta aP_a=\theta b,$  then $c(P_a,Q_a)= c(P_{a,\theta a},Q_{a,\theta a}).$

 The \textit{Euler-genus} 
 of the embedding $(P,Q)$ of  non-orientable pair $(P, \Pi\cup\theta\Pi)$ is given by
 $$\gamma^E(P,Q)=\left\{
         \begin{array}{ll}
           2c(P,Q)-\chi(P, \Pi\cup\theta\Pi), & \hbox{if $(P, \Pi\cup\theta\Pi)$ is non-orientable}; \\
           c(P,Q)-\chi(P, \Pi\cup\theta\Pi), & \hbox{otherwise.}
         \end{array}
       \right.$$
  For $k\geq 0$,  let $\gamma^E(P,Q)(k)$  be the number of embeddings of  $(P, \Pi\cup\theta\Pi)$ that have Euler-genus $k.$ The\textit{ Euler-genus polynomial} of the permutation-bipartition pair $(P, \Pi\cup\theta\Pi)$ is

$$\mathcal{E}_{(P, \Pi\cup\theta\Pi)}(z)=\dsum_{i=0}^{\infty}\mathcal{E}_{(P, \Pi\cup\theta\Pi)}(k)z^k.$$

The next two corollaries are based on two lemmas in Subsection 2.2. We will refer to these two corollaries as the genus version of Walkup reduction in the sequel. The reason for this is that it is more convenient to represent genus polynomials of signed graphs, just like genus polynomials of graphs.

\begin{cor} \label{cor:3}Let  $(P, \Pi\cup\theta\Pi)$ be a permutation-bipartition pair with  $b\in S,$ $\{b\}\notin \Pi $ and $\theta b\in S_{\theta},$ $\{\theta b\}\notin \theta\Pi $, and let $c(a,b)=2(c(P,Q)-c(P_{a,\theta a},Q_{a,\theta a}).$ If $(P, \Pi\cup\theta\Pi)$ is non-orientable, then
	$$\mathcal{E}_{(P, \Pi\cup\theta\Pi)}(z)= \dsum_{\begin{array}{c}
a\equiv b (mod\ \Pi_i), \ a\neq b\\
\theta a\equiv \theta b (mod\ \theta\Pi_i), \ \theta a\neq \theta b
\end{array}} z^{\epsilon(a,\theta a)} \mathcal{E}_{(P_{a,\theta a}, \Pi_{a}\cup \theta \Pi_{a})}(z),$$
where  $$\epsilon(a,\theta a)=\left\{
         \begin{array}{ll}
           2c(a,b)+2, & \hbox{if $a\neq bP,$ $\theta aP_a\neq\theta b,$ $ a\equiv b (mod P)$  and $\theta a\equiv \theta b (mod P_a)$ }; \\
           2c(a,b)+1, & \hbox{if $a\neq bP,$ $\theta aP_a\neq\theta b,$  and $\theta a\equiv \theta b (mod P_a)$ }; \\
           2c(a,b)+1, & \hbox{if $a\neq bP,$ $\theta aP_a\neq\theta b,$ and $ a\equiv b (mod P)$ }; \\
                      0, & \hbox{otherwise.}
         \end{array}
       \right.
$$
Otherwise $$\mathcal{E}_{(P, \Pi\cup\theta\Pi)}(z)= \dsum_{\begin{array}{c}
a\equiv b (mod\ \Pi_i), \ a\neq b\\
\theta a\equiv \theta b (mod\ \theta\Pi_i), \ \theta a\neq \theta b
\end{array}} z^{\epsilon(a,\theta a)} \mathcal{E}_{(P_{a,\theta a}, \Pi_{a}\cup \theta \Pi_{a})}(z),$$
with  $$\epsilon(a,\theta a)=\left\{
         \begin{array}{ll}
           c(a,b)+2, & \hbox{if $a\neq bP,$ $\theta aP_a\neq\theta b,$ $ a\equiv b (mod P)$  and $\theta a\equiv \theta b (mod P_a)$ }; \\
           c(a,b)+1, & \hbox{if $a\neq bP,$ $\theta aP_a\neq\theta b,$  and $\theta a\equiv \theta b (mod P_a)$ }; \\
           c(a,b)+1, & \hbox{if $a\neq bP,$ $\theta aP_a\neq\theta b,$ and $ a\equiv b (mod P)$ }; \\
                      0, & \hbox{otherwise.}
         \end{array}
       \right.
$$
\end{cor}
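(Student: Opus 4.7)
The plan is to lift Lemma~\ref{lem:S1} from the cycle-count level to the Euler-genus level, and then sum. First, observe that
\[
S(\Pi\cup\theta\Pi) = \bigsqcup_{\substack{a\equiv b\,({\rm mod}\,\Pi)\\ a\neq b}} S(\Pi\cup\theta\Pi,\, a\to b,\ \theta a\to \theta b),
\]
because each bi-rotation $Q$ has a unique predecessor $a=bQ^{-1}$ of $b$ inside the block $\Pi_i\ni b$, and the bi-rotation structure automatically forces $\theta b\,Q=\theta a$ in the reversed cycle of $\theta\Pi_i$. For each summand, Lemma~\ref{lem:S1} supplies a bijection to $S(\Pi_a\cup\theta\Pi_a)$, so it suffices to compute, on a matched pair $(Q,Q_{a,\theta a})$, the shift
\[
\epsilon(a,\theta a) := \gamma^E(P,Q) - \gamma^E(P_{a,\theta a},Q_{a,\theta a}).
\]

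The shift is evaluated from $\gamma^E=\kappa\,c-\chi$, where $\kappa=2$ in the non-orientable case and $\kappa=1$ otherwise, and $\chi=\tfrac{1}{2}(\|P\|+\|Q\|+\|PQ\|-2n)$. I would collect five local contributions: (i) $2n$ drops by $2$; (ii) $\|Q\|=\|Q_{a,\theta a}\|$, since the hypothesis $\{b\}\notin\Pi$ prevents $b$ and $\theta b$ from being singleton $Q$-cycles, so deleting them merely shortens existing cycles; (iii) Lemma~\ref{lem:S1} yields $\|PQ\|-\|P_{a,\theta a}Q_{a,\theta a}\|=\delta_{a,bP}+\delta_{\theta a P_a,\theta b}$; (iv) $\|P\|-\|P_a\|=\pm 1$ according as $a\equiv b\,({\rm mod}\,P)$ or not, and then $\|P_a\|-\|P_{a,\theta a}\|=\pm 1$ according as $\theta a\equiv\theta b\,({\rm mod}\,P_a)$ or not, each bifurcation being read off the three-case definitions of $P_a$ and $P_{a,\theta a}$; (v) $c(P,Q)-c(P_{a,\theta a},Q_{a,\theta a})=\tfrac{1}{2}c(a,b)$ by the very definition of $c(a,b)$. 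Substituting into $\chi$ and then into $\gamma^E$ expresses $\epsilon(a,\theta a)$ as a piecewise function of the four flags
\[
a\equiv b\,({\rm mod}\,P),\quad \theta a\equiv\theta b\,({\rm mod}\,P_a),\quad a=bP,\quad \theta a P_a=\theta b,
\]
and then summing $z^{\gamma^E(P,Q)}=z^{\epsilon(a,\theta a)}\cdot z^{\gamma^E(P_{a,\theta a},Q_{a,\theta a})}$ over each partition class (which assembles $\mathcal{E}_{(P_{a,\theta a},\Pi_a\cup\theta\Pi_a)}(z)$) and then over admissible $a$ yields the stated identity. The orientable and non-orientable versions differ only through the coefficient of $c(a,b)$, which tracks $\kappa$.

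The main obstacle is the bookkeeping: the four flags above do not vary independently (for example $a=bP$ forces $a\equiv b\,({\rm mod}\,P)$, and $\theta a P_a=\theta b$ forces $\theta a\equiv\theta b\,({\rm mod}\,P_a)$), so only a few of the $16$ logical combinations are feasible, and the two Kronecker $\delta$'s from Lemma~\ref{lem:S1} must be aligned with the split/merge alternatives in the definitions of $P_a$ and $P_{a,\theta a}$. A compact table of the feasible combinations is the cleanest way to check that $\epsilon(a,\theta a)$ collapses to $0$ exactly on the ``otherwise'' branch and otherwise matches the claimed value on the nose.
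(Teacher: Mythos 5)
Your plan is correct and follows essentially the same route as the paper: the paper likewise reduces the identity to showing $\gamma^E(P,Q)=\gamma^E(P_{a,\theta a},Q_{a,\theta a})+\epsilon(a,\theta a)$ on the matched pairs supplied by Lemma \ref{lem:S1}, and then evaluates this shift through exactly your five local contributions ($\|Q\|$ unchanged since $\{b\}\notin\Pi$, the two Kronecker deltas for $\|PQ\|$, the split/merge alternatives for $\|P\|$ and $\|P_a\|$, and the change in $c(\cdot,\cdot)$), with a case analysis on the same four flags in which the combinations $a=bP,\ \theta aP_a\neq\theta b$ and $a\neq bP,\ \theta aP_a=\theta b$ are discarded as infeasible --- precisely the dependency your ``feasibility table'' is meant to record. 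The only cosmetic differences are that the paper writes out only the non-orientable case (leaving the orientable one to the change of coefficient you call $\kappa$) rather than treating both uniformly.
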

\begin{proof}We only give a proof for the non-orientable case. To prove the theorem, it suffices to show that $$\gamma^E(P,Q)=\gamma^E(P_{a,\theta a}, Q_{a,\theta a})+\epsilon(a,\theta a).$$
Recall that $\{b\}\notin \Pi,$ and $\{\theta b\}\notin \theta\Pi,$ in this case, we always have $\|Q\|-\|Q_{a,\theta a}\|=0$. The following four cases are discussed.
\begin{description}
  \item[Case 1] If $a=bP,$ and  $\theta aP_a=\theta b,$  then $\|P\|-\|P_{a,\theta a}\|=0$, and $c(P,Q)=c(P_a,Q_a)=c(P_{a,\theta a},Q_{a,\theta a}) $ By Lemma \ref{lem:S1}, $\|PQ\|-\|P_{a,\theta a}Q_{a,\theta a}\|=2.$ Thus,
\begin{align*}&\gamma^E(P,Q)-\gamma^E(P_{a,\theta a}, Q_{a,\theta a})\\=&2(c(P,Q)-c(P_{a,\theta a},Q_{a,\theta a}))-\frac{1}{2}(\|P\|-\|P_{a,\theta a}\|)
\\&-\frac{1}{2}(\|Q\|-\|Q_{a,\theta a}\|)-\frac{1}{2}(\|PQ\|-\|P_{a,\theta a}Q_{a,\theta a}\|)\\&+\frac{1}{2}(2n-2(n-1))
\\=&0-0-0-1+1=0.
\end{align*}
  \item[Case 2] If $a=bP,$ and  $\theta a P_a\neq\theta b,$ it's impossible.
  \item[Case 3]If $a\neq bP,$ and  $\theta aP_a\neq\theta b,$  then $\|PQ\|-\|P_{a,\theta a}Q_{a,\theta a}\|=0$ by Lemma \ref{lem:S1}. If
  both $a$ and $b$, and $\theta a$ and $\theta b$ belong to the distinct cycles of $P$ and $P_a$, respectively, then $\|P\|=\|P_{a,\theta a}\|+2,$
    and $c(P,Q)=c(P_{a,\theta a},Q_{a,\theta a}),$ then
    \begin{align*}&\gamma^E(P,Q)-\gamma^E(P_{a,\theta a}, Q_{a,\theta a})\\=&2(c(P,Q)-c(P_{a,\theta a},Q_{a,\theta a}))-\frac{1}{2}(\|P\|-\|P_{a,\theta a}\|)
\\&-\frac{1}{2}(\|Q\|-\|Q_{a,\theta a}\|)-\frac{1}{2}(\|PQ\|-\|P_{a,\theta a}Q_{a,\theta a}\|)\\&+\frac{1}{2}(2n-2(n-1))
\\=&0-1-0-0+1\\
=&0.
\end{align*}
If  both $a$ and $b$, and $\theta a$ and $\theta b$ belong to the same cycle of $P$ and $P_a$, respectively, then $\|P\|=\|P_{a,\theta a}\|-2,$
    and $c(P,Q)=c(P_{a,\theta a},Q_{a,\theta a}),$ and \begin{align*}\gamma^E(P,Q)-\gamma^E(P_{a,\theta a}, Q_{a,\theta a})=&2(c(P,Q)-c(P_{a,\theta a},Q_{a,\theta a}))+2.
\end{align*}
In other cases, we have
\begin{align*}\gamma^E(P,Q)-\gamma^E(P_{a,\theta a}, Q_{a,\theta a})=&2(c(P,Q)-c(P_{a,\theta a},Q_{a,\theta a}))+1.\end{align*}

  \item[Case 4]If $a\neq bP,$ and  $\theta a P_a=\theta b,$  this case is impossible.

\end{description}
The result follows.

\end{proof}

\begin{cor} \label{cor:4}Let  $(P, \Pi\cup\theta\Pi)$ be a permutation-bipartition pair with $\{b\}\in \Pi, \{\theta b\}\in \theta\Pi ,$ and let $(P_{b,\theta b}, \Pi_{b}\cup \theta \Pi_{b}), = (P, \Pi\cup\theta\Pi)|_{\theta b\rightarrow \theta b}^{b\rightarrow b}$. If $(P, \Pi\cup\theta\Pi)$ is non-orientable, then	$$\mathcal{E}_{(P, \Pi\cup\theta\Pi)}(z)=z^{\epsilon(b,\theta b)}   \mathcal{E}_{(P_{b,\theta b}, \Pi_{b}\cup \theta \Pi_{b})}(z)$$
 where $$\epsilon(b,\theta b)=\left\{
         \begin{array}{ll}
           2, & \hbox{if $b= bP,$ and $\theta bP_b=\theta b$ }; \\
           0, & \hbox{if $b\neq bP,$ and  $\theta b\neq \theta bP_b.$}
         \end{array}
       \right.
$$
Otherwise we have $$\mathcal{E}_{(P, \Pi\cup\theta\Pi)}(z)=\mathcal{E}_{(P_{b,\theta b}, \Pi_{b}\cup \theta \Pi_{b})}(z).$$
\end{cor}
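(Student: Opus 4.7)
The plan is to apply Lemma \ref{lem:S2} to the reduction $(P_{b,\theta b},\Pi_b\cup\theta\Pi_b)=(P,\Pi\cup\theta\Pi)|_{\theta b\to\theta b}^{b\to b}$ and then to track the change in Euler characteristic and orbit count under the induced map $Q\mapsto Q_{b,\theta b}$. Since $\{b\}\in\Pi$ and $\{\theta b\}\in\theta\Pi$ are forced singleton blocks, every $Q\in S(\Pi\cup\theta\Pi)$ automatically satisfies $bQ=b$ and $\theta bQ=\theta b$, so the bijection from Lemma \ref{lem:S2} actually runs between the full embedding sets $S(\Pi\cup\theta\Pi)$ and $S(\Pi_b\cup\theta\Pi_b)$. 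It therefore suffices to prove the per-embedding identity $\gamma^E(P,Q)=\gamma^E(P_{b,\theta b},Q_{b,\theta b})+\epsilon(b,\theta b)$ and then sum.

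Next I would compute the change in $\chi$. The bijection gives $\|Q\|-\|Q_{b,\theta b}\|=2$ (the singleton cycles $(b)$ and $(\theta b)$ are dropped), the ambient set shrinks from $2n$ to $2(n-1)$, and Lemma \ref{lem:S2} yields $\|PQ\|-\|P_{b,\theta b}Q_{b,\theta b}\|=\delta_{b,bP}+\delta_{\theta bP_b,\theta b}$. The remaining term $\|P\|-\|P_{b,\theta b}\|$ splits by case: in Case~1 ($bP=b$ and $\theta bP_b=\theta b$) both reductions peel off singleton cycles of $P$, so $\|P\|-\|P_{b,\theta b}\|=2$; in Case~2 ($bP\neq b$ and $\theta bP_b\neq\theta b$) the bits $b$ and $\theta b$ are excised from the interior of non-trivial cycles and $\|P\|-\|P_{b,\theta b}\|=0$. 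Plugging into $\chi=\tfrac12(\|P\|+\|Q\|+\|PQ\|-2n)$ then gives $\chi(P,Q)-\chi(P_{b,\theta b},Q_{b,\theta b})=2$ in Case~1 and $0$ in Case~2.

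I would then handle the orbit count $c(P,Q)$ of $\langle P,Q\rangle$. In Case~1 both $b$ and $\theta b$ are fixed by $P$ and by $Q$, so each is an isolated orbit that vanishes on deletion, giving $c(P,Q)-c(P_{b,\theta b},Q_{b,\theta b})=2$; in Case~2 the bits $b$ and $\theta b$ are fixed only by $Q$ and lie inside non-trivial $P$-cycles, so their removal merely short-circuits those cycles without splitting any orbit, giving $c(P,Q)-c(P_{b,\theta b},Q_{b,\theta b})=0$. Combining via $\gamma^E=2c-\chi$ in the non-orientable case produces differences of $2\cdot 2-2=2$ in Case~1 and $0-0=0$ in Case~2, matching the displayed $\epsilon(b,\theta b)$; in the orientable case $\gamma^E=c-\chi$ and both differences come out to $0$, agreeing with the absence of a factor of $z$.

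The hard part will be confirming that Cases~1 and~2 are exhaustive --- that the mixed configurations (exactly one of $bP=b$, $\theta bP_b=\theta b$ holding) cannot arise under the hypotheses of the corollary. I expect this to follow from the $\theta$-reversal symmetry imposed on $P$ (any cycle $(a,b,\ldots,f)$ of $P$ forces $(\theta a,\theta f,\ldots,\theta b)$ to also be a cycle of $P$), which under the singleton-block assumption on $\{b\}$ and $\{\theta b\}$ should couple the fixed-point status of $b$ under $P$ with that of $\theta b$ under $P_b$. A brief secondary check that orientability status is preserved under the reduction is also required, which should be straightforward since in Case~1 the operation merely discards isolated components of $\langle P,Q\rangle$ and in Case~2 it contracts interior points of non-trivial $P$-cycles.
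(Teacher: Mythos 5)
Your proposal is correct and follows essentially the same route as the paper: apply Lemma \ref{lem:S2}, split into the two cases $bP=b,\ \theta bP_b=\theta b$ versus $bP\neq b,\ \theta bP_b\neq\theta b$, and track the changes in $\|P\|$, $\|Q\|$, $\|PQ\|$, $c(P,Q)$ and the size of the underlying set to get $\gamma^E(P,Q)=\gamma^E(P_{b,\theta b},Q_{b,\theta b})+\epsilon(b,\theta b)$, with identical arithmetic in each case. The only difference is that you explicitly flag the exhaustiveness of the two cases and the preservation of orientability as points needing verification, which the paper leaves implicit.
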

\begin{proof}Again, we only prove the theorem for the non-orientable case. Since $\{b\}\in \Pi,$ and $\{\theta b\}\in \theta\Pi,$ we have $(\|Q\|-\|Q_{b,\theta b}\|)=2.$ The following four cases are discussed.

If $b=bP,$ and  $\theta b=\theta bP_b,$ then $c(P,Q)=c(P_{b,\theta b},Q_{b,\theta b})+2,$ and $\|P\|=\|P_{b,\theta b}\|+2$. By Lemma \ref{lem:S2}, we have
\begin{align*}&\gamma^E(P,Q)-\gamma^E(P_{b,\theta b}, Q_{b,\theta b})\\=&2(c(P,Q)-c(P_{b,\theta b},Q_{b,\theta b}))-\frac{1}{2}(\|P\|-\|P_{b,\theta b}\|)
\\&-\frac{1}{2}(\|Q\|-\|Q_{b,\theta b}\|)-\frac{1}{2}(\|PQ\|-\|P_{b,\theta b}Q_{b,\theta b}\|)\\&+\frac{1}{2}(2n-2(n-1))
\\=&4-1-1-1+1=2
\end{align*}
  
  If $b\neq bP,$ and  $\theta b\neq \theta bP_b,$ then $c(P,Q)=c(P_{b,\theta b},Q_{b,\theta b}),$ and $\|P\|=\|P_{b,\theta b}\|$. By Lemma \ref{lem:S2}, \begin{align*}\gamma^E(P,Q)-\gamma^E(P_{b,\theta b}, Q_{b,\theta b})=0-0-1+0+1=0
\end{align*}
From the discussions above, we have proved that $\gamma^E(P,Q)=\gamma^E(P_{b,\theta b}, Q_{b,\theta b})+\epsilon(b,\theta b),$ the result follows.
\end{proof}

The two corollaries above imply the genus version of the Walkup reduction of Subsection \ref{region}. Hence, we can associate with each pair $(P_{\Sigma_n} ,\Pi_{\Sigma_n}\cup\theta{\Pi}_{\Sigma_n} )$ a \textit{genus reduction diagram }$\mathfrak{G}_{(P, \Pi\cup\theta\Pi)}$ which differs from the region reduction diagram only in that the edge labels $\delta_{a,bP}+\delta_{\theta aP_a,\theta b}$ and $\delta_{b,bP}+\delta_{\theta b,\theta bP_b}$ are replaced with $\epsilon(a,\theta a)$ and $\epsilon(b,\theta b)$ respectively. The following theorem follows.

\begin{thm}\label{genus}
The embeddings of the permutation-bipartition pair $(P, \Pi\cup\theta\Pi)$ are in a one-to-one corresponding with the directed path of $\mathfrak{G}_{(P, \Pi\cup\theta\Pi)}.$ This correspondence is such that the Euler-genus of the embedding $(P, \Pi\cup\theta\Pi)$ is the sum of the weights along the corresponding path.
\end{thm}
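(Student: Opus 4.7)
The plan is to argue by induction on the length $n$ of the linear ordering $s=(b_1\prec b_2\prec\cdots\prec b_n)$ of $\Pi$, running exactly parallel to the region version Theorem \ref{region}, but replacing the region weights $\delta_{a,bP}+\delta_{\theta aP_a,\theta b}$ and $\delta_{b,bP}+\delta_{\theta b,\theta bP_b}$ with the genus weights $\epsilon(a,\theta a)$ and $\epsilon(b,\theta b)$ from Corollary \ref{cor:3} and Corollary \ref{cor:4}. The combinatorial tree $\mathfrak{G}_{(P,\Pi\cup\theta\Pi)}$ coincides with the tree $\mathfrak{F}_{(P,\Pi\cup\theta\Pi)}$ as an unlabelled object, so the bijection between embeddings $Q$ of $(P,\Pi\cup\theta\Pi)$ and maximal directed paths in $\mathfrak{G}_{(P,\Pi\cup\theta\Pi)}$ is the same one already established in Theorem \ref{region}; there is nothing new to check on that side. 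The content is entirely in checking that the genus weights along the path add up to $\gamma^E(P,Q)$.

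The base case $n=0$ is immediate: the tree is a single vertex, the only embedding is the empty bi-rotation, its Euler-genus is $0$, and the sum of weights along the empty path is $0$. For the inductive step, let $(P,\Pi\cup\theta\Pi)$ have first block containing $b=b_1$. First I would split into the two cases from the construction of $\mathfrak{G}_{(P,\Pi\cup\theta\Pi)}$. If $\{b\}\in\Pi$ and $\{\theta b\}\in\theta\Pi$ are singletons, the pair has a unique descendant $(P_{b,\theta b},\Pi_b\cup\theta\Pi_b)$ and each embedding $Q$ satisfies $f(Q)=Q_{b,\theta b}$ by Lemma \ref{lem:S2}; Corollary \ref{cor:4} then gives
\[
\gamma^E(P,Q)\;=\;\gamma^E(P_{b,\theta b},Q_{b,\theta b})+\epsilon(b,\theta b),
\]
so the induction hypothesis applied to the descendant finishes this case. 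Otherwise the descendants are indexed by admissible constraints $\{a\to b,\theta a\to\theta b\}$, every embedding $Q$ of $(P,\Pi\cup\theta\Pi)$ belongs to exactly one $S(\Pi\cup\theta\Pi,a\to b,\theta a\to\theta b)$, and Lemma \ref{lem:S1} identifies it with an embedding $Q_{a,\theta a}$ of the corresponding descendant; Corollary \ref{cor:3} then gives
\[
\gamma^E(P,Q)\;=\;\gamma^E(P_{a,\theta a},Q_{a,\theta a})+\epsilon(a,\theta a),
\]
and again the induction hypothesis on the descendant closes the step.

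The orientability bookkeeping is where I expect the main obstacle. The quantity $\epsilon$ in Corollary \ref{cor:3} takes one shape in the non-orientable case and another in the orientable case, and as we descend along a path the pair can switch from orientable to non-orientable. I would handle this by observing that the definition $\gamma^E(P,Q)=2c(P,Q)-\chi$ in the non-orientable case is already the invariant that is tracked by the proofs of Corollaries \ref{cor:3}--\ref{cor:4}, and that once a pair becomes non-orientable in the descent, all further descendants remain non-orientable (because transitivity of $\langle P,Q_i,Q_j\rangle$ persists under the reductions defining $P_{a,\theta a}$). I would make this monotonicity precise first, and then the inductive sum-of-weights identity goes through uniformly: along any path the weights are computed according to the orientability status of the pair at each step, which matches exactly the case split used in Corollaries \ref{cor:3} and \ref{cor:4}.

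Finally, I would conclude by telescoping: summing the equations $\gamma^E(P^{(i)},Q^{(i)})=\gamma^E(P^{(i+1)},Q^{(i+1)})+w_i$ for $i=0,1,\ldots,n-1$ along the unique path determined by $Q$ gives
\[
\gamma^E(P,Q)\;=\;\sum_{i=0}^{n-1} w_i,
\]
since the terminal pair has Euler-genus $0$. This is the asserted correspondence.
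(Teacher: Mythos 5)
Your proposal takes essentially the same route as the paper: the paper offers no written proof of Theorem \ref{genus} beyond the remark that it ``follows'' from Corollaries \ref{cor:3} and \ref{cor:4} just as Theorem \ref{region} follows from Corollaries \ref{cor:1} and \ref{cor:2}, and your telescoping induction along the maximal paths of the reduction diagram is exactly the intended formalization of that remark. The one substantive point is the orientability bookkeeping you flag: this is a genuine subtlety that the paper itself never addresses, but your proposed patch --- that non-orientability persists under the reductions --- is asserted rather than proved, and it is doubtful as stated (reducing away the bits of the twisted loop in a pair such as that of Figure \ref{fig:reB2} can leave an orientable descendant of a non-orientable pair, and at such a step the increments in Corollaries \ref{cor:3}--\ref{cor:4} are computed with the formula $2c-\chi$ on both sides even though the descendant's Euler-genus is defined by $c-\chi$). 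So your proof is faithful to, and more careful than, the paper's; but the monotonicity claim would need either a proof or a replacement by an explicit correction term at the step where the orientability type changes.
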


\begin{figure}[h]
\centering
\includegraphics[width=2.68cm]{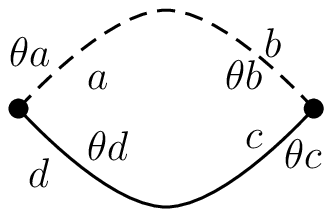}\\

\includegraphics[width=10cm]{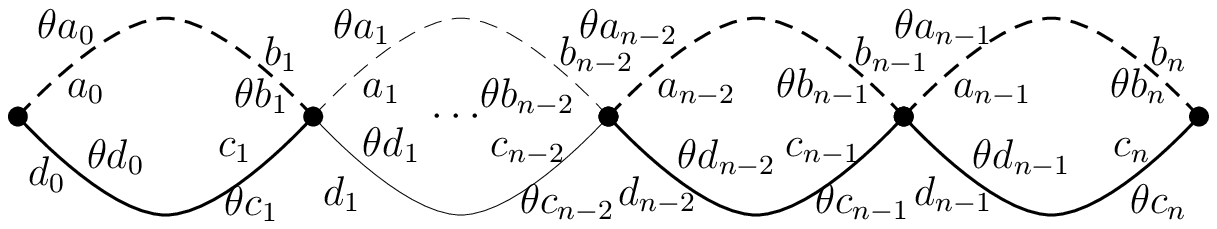}
\caption{The signed graph $\Sigma=(C_2,\sigma)$ with one negative edge and one positive edge, and the $\Sigma$-linear graph $\Sigma_n.$}\label{fig:H}
\end{figure}
\vskip 1cm
Let $(C_2,\sigma)$ be the signed graph of Figure \ref{fig:H}. The signed graph $\Sigma_n$ is obtained by successively amalgamating $n$ copies of $(C_2,\sigma)$ as shown in  Figure \ref{fig:H}.  Let $\Sigma_n^{'}$ denote the permutation-bipartition
pair obtained from $(P_{\Sigma_n} ,\Pi_{\Sigma_n}\cup\theta{\Pi}_{\Sigma_n} )$ by replacing the four transpositions $$(b_n\theta{a}_{n-1})(\theta b_n {a}_{n-1})(c_n{d}_{n-1})(\theta c_n\theta{d}_{n-1})$$ with
$$(b_{n}\theta{a}_{n-1}\theta d_{n-1}\theta c_{n})(b_{n}\theta{a}_{n-1}\theta d_{n-1}\theta c_{n}).$$ Again, let $\Sigma_n^{''}$ denote the permutation-bipartition pair obtained from $(P_{\Sigma_n} ,\Pi_{\Sigma_n}\cup\theta{\Pi}_{\Sigma_n} )$ by replacing the four transpositions $$(b_n\theta{a}_{n-1})(\theta b_n {a}_{n-1})(c_n{d}_{n-1})(\theta c_n\theta{d}_{n-1})$$ with
$$(c_{n}d_{n-1}b_{n}\theta{a}_{n-1})(\theta c_{n}a_{n-1} \theta b_{n}\theta{d}_{n-1}).$$ Apply the genus version of the Walkup reduction to $(P_{\Sigma_n} ,\Pi_{\Sigma_n}\cup\theta{\Pi}_{\Sigma_n} )$ so as to eliminate
all the bits in the last copy of $(C_2,\sigma)$. Figure \ref{fig:redu-1}, Figure \ref{fig:redu-2} and Figure \ref{fig:redu-3} illustrate the process, it follows that

 \begin{eqnarray}\label{cp-reduction}
 \mathcal{E}_{\Sigma_n}(z)&=&4z\mathcal{E}_{\Sigma_{n-1}}(z)+2\mathcal{E}_{\Sigma_{n-1}^{'}}(z) \\
 \mathcal{E}_{\Sigma_{n}^{'}}(z)&=&2z\mathcal{E}_{\Sigma_{n-1}}(z)+4z\mathcal{E}_{\Sigma_{n-1}^{''}}(z)\\
 \mathcal{E}_{\Sigma_{n}^{''}}(z)&=&6z^2\mathcal{E}_{\Sigma_{n-1}}(z)
 \end{eqnarray}
Actually, Equation (3.1), Equation (3.2) and Equation (3.2) follow from  Figure \ref{fig:redu-1}, Figure \ref{fig:redu-2} and Figure \ref{fig:redu-1}, respectively.
 Let $$V_n=\left(
             \begin{array}{c}
                \mathcal{E}_{\Sigma_n}(z) \\
                \mathcal{E}_{\Sigma_{n}^{'}}(z) \\
               \mathcal{E}_{\Sigma_{n}^{''}}(z) \\
             \end{array}
           \right)
 $$

So the equations above is equivalent to the follow equation $$V_n(z)=\left(
                                                                    \begin{array}{ccc}
                                                                      4z & 2 & 0 \\
                                                                      4z & 0 & 2z \\
                                                                      6z^2 & 0 & 0 \\
                                                                    \end{array}
                                                                  \right)V_{n-1}(z),
$$
for $n\geq 2.$

Since $(P_{\Sigma_1} ,\Pi_{\Sigma_1}\cup\theta{\Pi}_{\Sigma_1} )=\{(b_1\theta{a}_{0})(\theta b_1 {a}_{0})(c_1{d}_{0})(\theta c_1\theta{d}_{0}),\{\{b_1,c_1\},\{a_0,d_0\}, $ $\{\theta b_1,\theta c_1\}, \{\theta{a}_{0},\theta{d}_{0}\}\}$,  $(P_{\Sigma_1^{'}} ,\Pi_{\Sigma_1^{'}}\cup\theta{\Pi}_{\Sigma_1^{'}} )=\{(b_{1}\theta{a}_{0}\theta d_{0}\theta c_{1})(\theta b_{1}{c}_{1} d_{0}a_{0}),\{\{b_1,c_1\},$ $\{a_0,d_0\},\{\theta b_1,\theta c_1\}, \{\theta{a}_{0},\theta{d}_{0}\}\}$, and  $(P_{\Sigma_1^{''}} ,\Pi_{\Sigma_1^{''}}\cup\theta{\Pi}_{\Sigma_1^{''}} )=\{(c_{1}d_{0}b_{1}\theta{a}_{0})(\theta c_{1}a_{0} \theta b_{1}\theta{d}_{0}),$ $\{\{b_1,c_1\},\{a_0,d_0\},\{\theta b_1,\theta c_1\}, \{\theta{a}_{0},\theta{d}_{0}\}\}$, it's follows that

$$V_1(z)=\left(
             \begin{array}{c}
                z \\
                z \\
               z^2\\
             \end{array}
           \right)
 $$

Let's list the values of $V_n(z)$ for $n=2,3,4,$ and $5,$

 \begin{eqnarray*}
V_2(z)&=&\left(
\begin{array}{c}
 4 z^2+2 z \\
 2 z^3+4 z^2 \\
 6 z^3 \\
\end{array}
\right),
\\
V_3(z)&=&\left(
\begin{array}{c}
 20 z^3+16 z^2 \\
 12 z^4+16 z^3+8 z^2 \\
 24 z^4+12 z^3 \\
\end{array}
\right),\\
V_4(z)&=&\left(
\begin{array}{c}
 104 z^4+96 z^3+16 z^2 \\
 48 z^5+104 z^4+64 z^3 \\
 120 z^5+96 z^4 \\
\end{array}
\right),\\
V_{5}(z)&=&\left(
\begin{array}{c}
 512 z^5+592 z^4+192 z^3 \\
 240 z^6+608 z^5+384 z^4+64 z^3 \\
 624 z^6+576 z^5+96 z^4 \\
\end{array}
\right).
 \end{eqnarray*}

\begin{figure}[h]
\centering
\includegraphics[width=14cm,height=14cm]{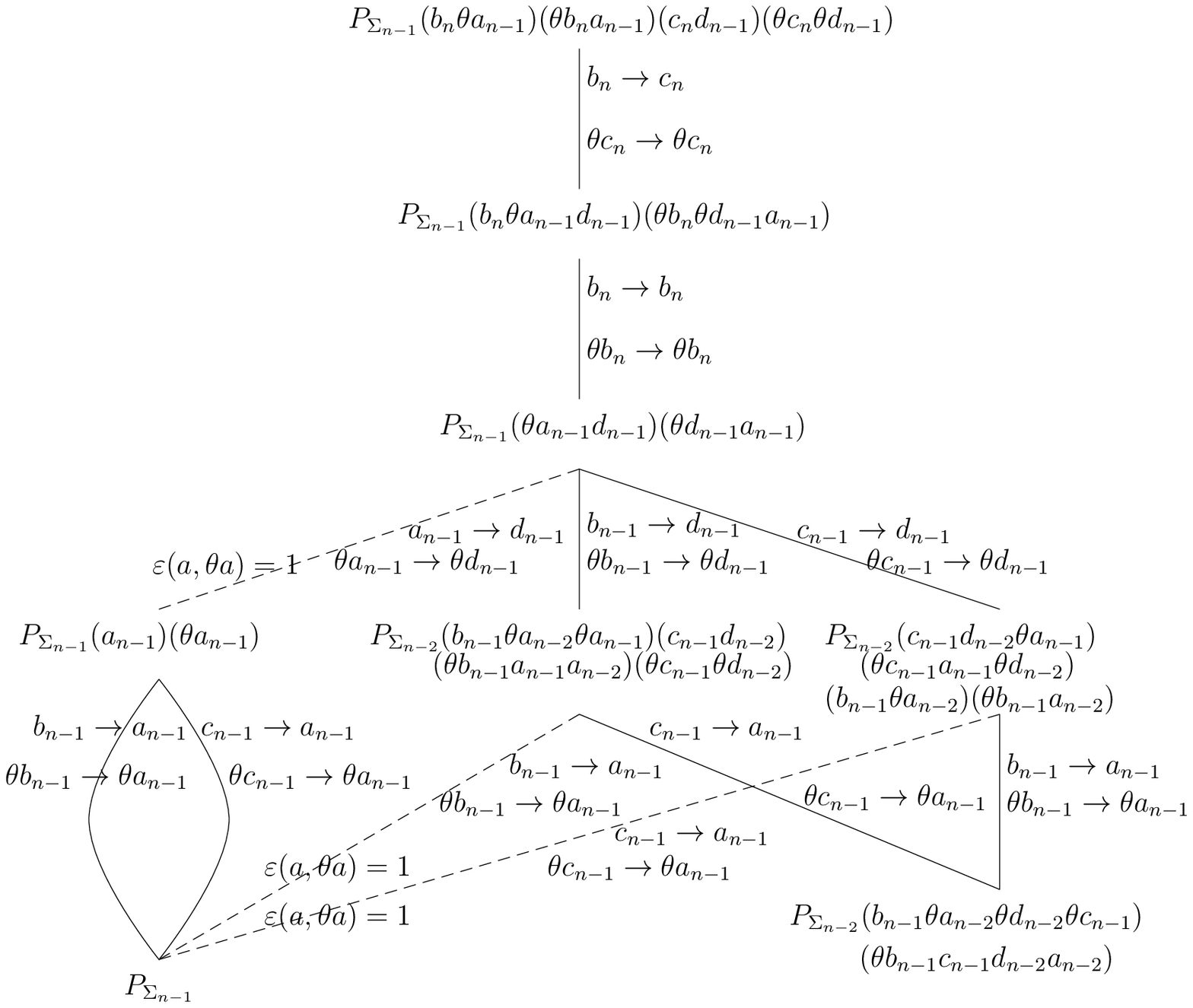}
\caption{}\label{fig:redu-1}
\end{figure}

\begin{figure}[h]
\centering
\includegraphics[width=14cm,height=14cm]{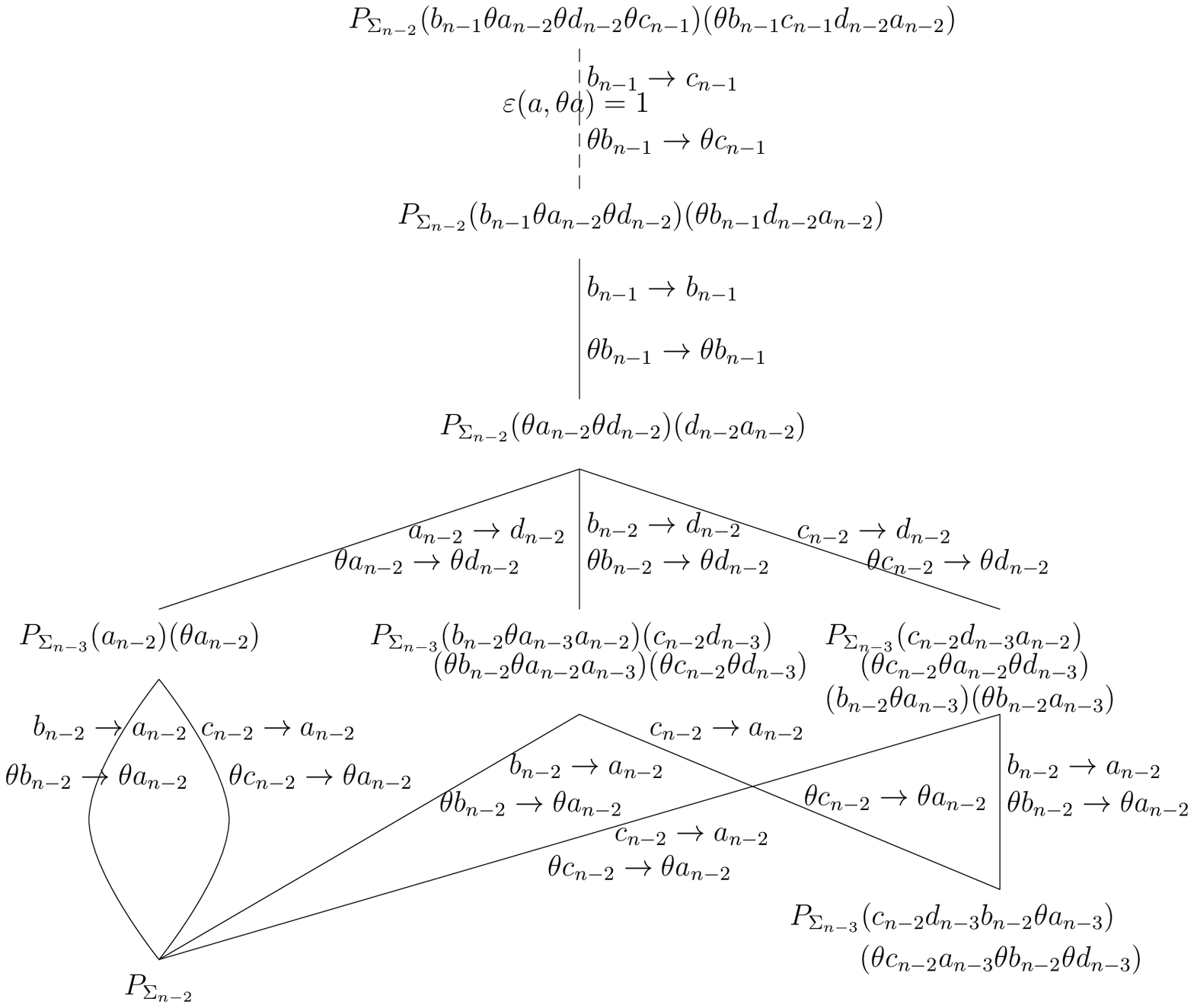}
\caption{}\label{fig:redu-2}
\end{figure}
\begin{figure}[h]
\centering
\includegraphics[width=11cm,height=13cm]{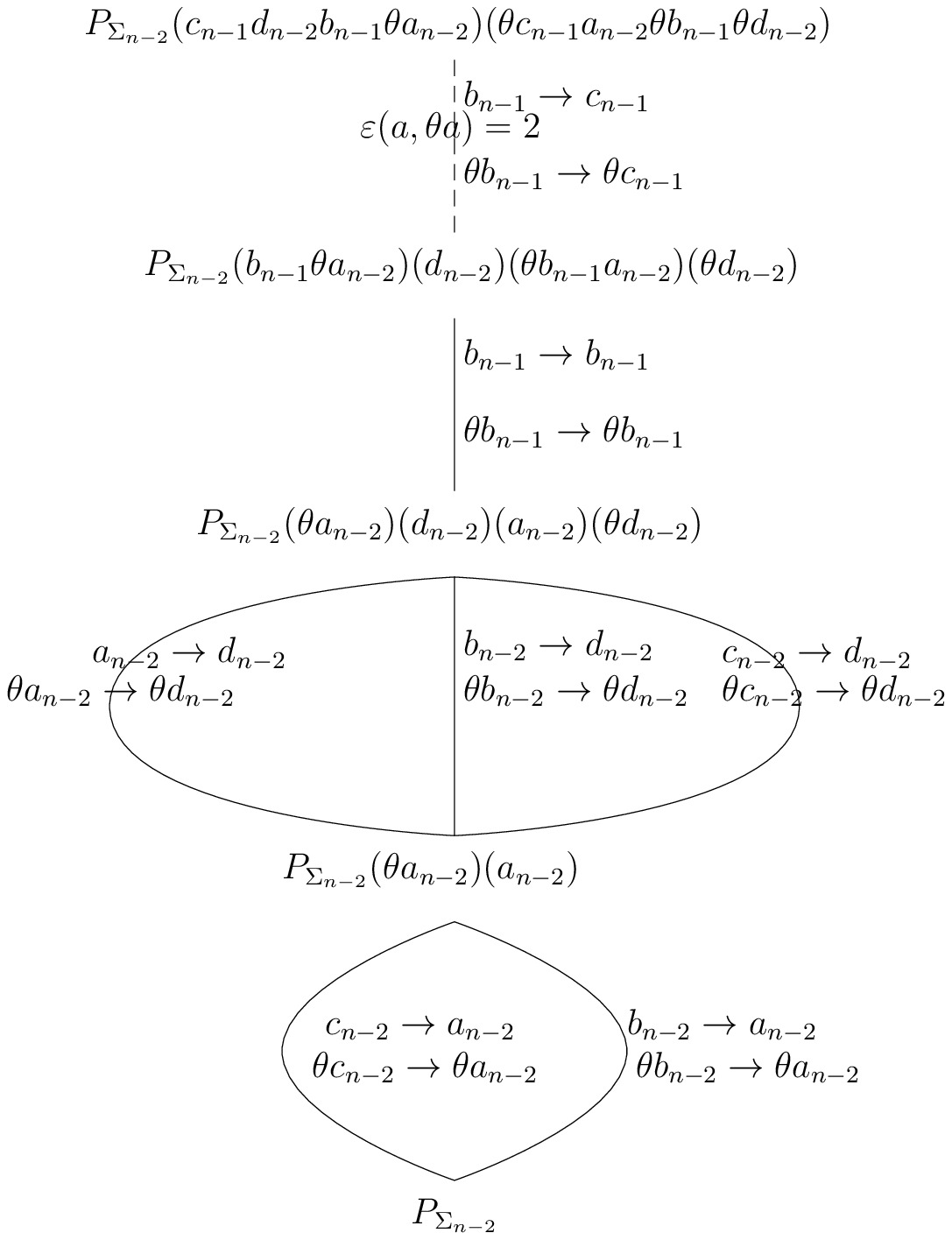}
\caption{}\label{fig:redu-3}
\end{figure}

We give a review of $\Sigma$-linear signed graph families which is a generalization of $H$-linear graph families which was introduced by Stahl in \cite{Sta91a}. Suppose $\Sigma$ is a  connected signed graph. Let $U=\{u_1,u_2,\ldots,u_s\}$ and $V=\{v_1,v_2,\ldots,v_s\}$ be disjoint subsets of $V_\Sigma$. For $i=1,2,\ldots$ , let $\Gamma_i$ be a copy of $\Sigma$ and  let $f_i:\Sigma\rightarrow \Gamma_i$ be an isomorphism. For each $i\geq1$ and $1\leq j\leq s$, we let $u_{i,j}=f_i(u_j)$ and $v_{i,j}=f_i(v_j)$.  An  \textit{{$\Sigma$-linear family}} of sigened graphs $\mathbf{\mathcal{S}} =\{\Sigma_n\}_{n=1}^{\infty}$ is constructed recursively  as follows: (1) $\Sigma_1=\Gamma_1$. (2) For  $j=1,2,\ldots,s,$ $\Sigma_n$ is constructed from $\Sigma_{n-1}$ and $\Gamma_n$ by amalgamating the vertex $v_{n-1,j}$ of $\Sigma_{n-1}$ with the vertex $u_{{n},{j}}$ of $ \Gamma_{n}$.

The procedure above generalizes to the following analog of proposition 5.2 of \cite{Sta97}.

\begin{prop} Let $\mathbf{\mathcal{S}} =\{\Sigma_n\}_{n=1}^{\infty}$  be an $\Sigma$-linear family of signed graphs. Then there
exist a positive integer $k$, a $k\times k$ transfer matrix $M(z)$ and a column $k$-vector $v(z)$, with integer coefficients, such that the first entry of $M(z)^nV(z)$ is $\mathcal{E}_{\Sigma_n}(z)$ .
\end{prop}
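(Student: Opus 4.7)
The plan is to mimic the proof of Proposition 5.2 in \cite{Sta97}, replacing the orientable Walkup reduction by the genus version developed above. First I would associate to $\Sigma_n$ the permutation-bipartition pair $(P_{\Sigma_n}, \Pi_{\Sigma_n}\cup\theta\Pi_{\Sigma_n})$. Then, fixing a linear ordering on the bits that places last the bits incident with the amalgamation vertices $v_{n-1,j}$, I would iteratively apply Corollaries \ref{cor:3} and \ref{cor:4} (equivalently, walk down the genus reduction diagram $\mathfrak{G}_{(P_{\Sigma_n},\Pi_{\Sigma_n}\cup\theta\Pi_{\Sigma_n})}$ provided by Theorem \ref{genus}) so as to eliminate every bit contributed by the terminal copy $\Gamma_n$ except those that meet the amalgamation vertices. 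At each elimination step the current pair is replaced by a weighted sum of smaller pairs whose weights are monomials $z^{\epsilon}$ recording the contribution to the Euler-genus.

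After all interior bits of $\Gamma_n$ have been processed, what remains is a weighted sum of permutation-bipartition pairs, each of which agrees with $(P_{\Sigma_{n-1}}, \Pi_{\Sigma_{n-1}}\cup\theta\Pi_{\Sigma_{n-1}})$ outside the vertices $v_{n-1,j}$ and differs from it only in how the transpositions of $P$ (or the singleton memberships of $\Pi\cup\theta\Pi$) at those vertices have been rearranged by the reduction. The central observation is that the set of possible such local rearrangements is \emph{finite}: the multiset of bits that $\Gamma_n$ contributes at each $v_{n-1,j}$ and the number of cyclic ways they can be woven into the bi-rotation there depend only on the fixed signed graph $\Sigma$, not on $n$. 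Let $\Sigma_n^{(1)}=\Sigma_n,\Sigma_n^{(2)},\ldots,\Sigma_n^{(k)}$ enumerate this finite list of types. The pairs $\Sigma_n'$ and $\Sigma_n''$ of the worked example preceding the proposition are exactly the instances that appear when $\Sigma=(C_2,\sigma)$ and $s=1$, so $k=3$ in that case.

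Since each type is characterized entirely by its local bi-rotation at the $s$ amalgamation vertices and since the genus reduction is itself local at those vertices, applying the same elimination procedure to the next copy $\Gamma_{n+1}$ yields, for each $i\in\{1,\ldots,k\}$, a relation
$$\mathcal{E}_{\Sigma_n^{(i)}}(z) \;=\; \sum_{j=1}^{k} m_{ij}(z)\,\mathcal{E}_{\Sigma_{n-1}^{(j)}}(z),$$
with $m_{ij}(z)\in\mathbb{Z}[z]$. Setting $M(z)=(m_{ij}(z))$ and $V(z)$ equal to the column vector of Euler-genus polynomials of the $k$ types of $\Sigma_0$ (that is, the starting configuration before amalgamating the first copy), iteration of the recurrence yields $V_n(z)=M(z)^n V(z)$, and the first coordinate is $\mathcal{E}_{\Sigma_n}(z)$, as required.

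The main obstacle is the finiteness-and-locality step of the second paragraph: one must verify carefully that after the interior bits of $\Gamma_n$ have been removed, the resulting modified pair depends on $(P_{\Sigma_{n-1}}, \Pi_{\Sigma_{n-1}}\cup\theta\Pi_{\Sigma_{n-1}})$ \emph{only} through the local bi-rotation structure at $v_{n-1,j}$, $1\leq j\leq s$. This is a purely combinatorial bookkeeping argument that traces how the substitutions $P\mapsto P_{a,\theta a}$ of Subsection 2.2 act transposition-by-transposition, and crucially uses the fact that amalgamation identifies only a finite number $s$ of vertices per step. Once this locality is verified, the rest of the proof is formal linear algebra.
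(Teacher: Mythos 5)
Your proposal follows essentially the same route as the paper, which in fact offers no written proof at all beyond the worked $(C_2,\sigma)$ example with its three types $\Sigma_n,\Sigma_n',\Sigma_n''$ and the remark that ``the procedure above generalizes'' to the analogue of Proposition 5.2 of Stahl's paper on zeros of genus polynomials. Your writeup makes explicit the finiteness-and-locality argument that the paper leaves entirely implicit, and correctly identifies it as the one step requiring genuine verification; aside from a minor indexing quibble about whether the initial vector should be $V_0$ or $V_1$ (an ambiguity already present in the statement itself), this is the intended argument.
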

Again, by using the similar technique of the proof of Theorem 3.2 of \cite{CGMT20}, we have the following result.
\begin{thm} \label{linear:genus}
Let $\mathbf{\mathcal{S}} =\{\Sigma_n\}_{n=1}^{\infty}$  be an $\Sigma$-linear family of signed graphs. Then there exist a positive integer $k$ and polynomials $a_1(z), a_2(z), \ldots, a_k(z)$ with integer coefficients, such that the Euler-genus polynomial $\mathcal{E}_{\Sigma_n}(z)$ satisfies the $k\uth$-order homogeneous recursive equation
$$
\mathcal{E}_{\Sigma_n}(z)= a_1(z)\mathcal{E}_{\Sigma_{n-1}}(z)(z)+a_2(z)\mathcal{E}_{\Sigma_{n-2}}+\cdots+a_k(z)\mathcal{E}_{\Sigma_{n-k}}
$$
\noindent with the initial conditions
$\mathcal{E}_{\Sigma_1}(z),\mathcal{E}_{\Sigma_2}(z),\ldots,\mathcal{E}_{\Sigma_k(z)}.$
\end{thm}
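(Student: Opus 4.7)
The plan is to combine the preceding Proposition with the Cayley--Hamilton theorem, which is the same template used in the proof of Theorem 3.2 of \cite{CGMT20}. That Proposition supplies a positive integer $k$, a $k \times k$ transfer matrix $M(z)$, and a column $k$-vector $V(z)$, all with entries in $\mathbb{Z}[z]$, such that the first coordinate of $M(z)^{n} V(z)$ equals $\mathcal{E}_{\Sigma_n}(z)$ for every $n \geq 1$. The recurrence for $\mathcal{E}_{\Sigma_n}(z)$ will be read off from a matrix-level recurrence for the sequence $\{M(z)^{n} V(z)\}_{n \geq 0}$.

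First I would invoke the Cayley--Hamilton theorem applied to $M(z)$, viewed as a matrix over the commutative ring $\mathbb{Z}[z]$. Writing the characteristic polynomial as
$$p(t) \= t^{k} - a_1(z) t^{k-1} - a_2(z) t^{k-2} - \cdots - a_k(z),$$
this yields the matrix identity
$$M(z)^{k} \= a_1(z)M(z)^{k-1} + a_2(z)M(z)^{k-2} + \cdots + a_k(z) I,$$
with each $a_i(z) \in \mathbb{Z}[z]$. Right-multiplying by $M(z)^{n-k} V(z)$ and taking the first coordinate of both sides, combined with the identification supplied by the Proposition, immediately gives
$$\mathcal{E}_{\Sigma_n}(z) \= a_1(z)\mathcal{E}_{\Sigma_{n-1}}(z) + a_2(z)\mathcal{E}_{\Sigma_{n-2}}(z) + \cdots + a_k(z)\mathcal{E}_{\Sigma_{n-k}}(z)$$
for every $n > k$. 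The required initial conditions $\mathcal{E}_{\Sigma_1}(z),\ldots,\mathcal{E}_{\Sigma_k}(z)$ are obtained directly from the genus reduction diagram applied to the first $k$ members of the family, exactly as in the worked $(C_2,\sigma)$-linear example where $V_1(z),\ldots,V_5(z)$ were computed explicitly.

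The main step that warrants care is not the Cayley--Hamilton passage itself, which is standard, but rather the preceding Proposition that legitimises the transfer-matrix set-up. Its proof is carried out by running the genus version of the Walkup reduction (Corollaries~\ref{cor:3} and~\ref{cor:4}) on the last attached copy $\Gamma_n$ of $\Sigma$ inside $\Sigma_n$: each bit of $\Gamma_n$ is eliminated in turn, producing a finite collection of auxiliary permutation-bipartition pairs (analogous to the $\Sigma_{n}^{'}$ and $\Sigma_{n}^{''}$ of the worked example) whose Euler-genus polynomials together span a finitely generated $\mathbb{Z}[z]$-module closed under the reduction. The entries of $M(z)$ are exactly the integer-coefficient polynomial weights $z^{\epsilon(a,\theta a)}$ and $z^{\epsilon(b,\theta b)}$ furnished by those corollaries. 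The hard part, and what makes the resulting recurrence $n$-independent, is the observation that the $\Sigma$-linear construction attaches each new copy at the same fixed pair of amalgamation vertices; consequently the list of auxiliary pairs, and the reduction coefficients relating them, are identical at every stage, so the matrix $M(z)$ and hence the polynomials $a_i(z)$ do not depend on $n$.
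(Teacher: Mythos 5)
Your proposal is correct and follows essentially the same route as the paper, which proves the theorem by invoking the transfer-matrix Proposition immediately preceding it and then appealing to ``the similar technique of the proof of Theorem 3.2 of \cite{CGMT20}'' --- precisely the Cayley--Hamilton argument you spell out. In fact you supply more detail than the paper does, both on the Cayley--Hamilton step and on why the Walkup reduction yields an $n$-independent transfer matrix.
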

\smallskip

Since a graph is a signed graph, Theorem \ref{linear:genus} can be seen as a generalization of Theorem 3.2 of \cite{CGMT20}.

It's obvious that we can also use the permutation-bipartition pairs to calculate the crosscap-number distribution of a graph,  i.e., the distribution
of the embeddings of a graph in the nonorientable surfaces, which was introduced in \cite{CGR94} by Chen, Gross and Rieper.  In \cite{CGR94}, they  demonstrated
how to use the rank of Mohar's overlap matrix \cite{Moh89} to calculate
the crosscap-number distribution. For combinatorial approach to the calculation of the crosscap-number distributions of graphs, see a recent paper of \cite{CG18}. We may refer the reader to \cite{GF87,FGS89,GRT89,Sta91b,Gro11a,Gro14,GT21} for calculating genus distributions of a graph and its history.

\section{The expected genus}

 Let $\Sigma$ be a connected signed graph, the expected genus (or average genus ) of $\Sigma$ is given by $$\gamma_{E} (G)=\frac{\sum_{i\geq0}i\mathcal{E}_{\Sigma}(i)}{\prod_{v\in V(\Sigma)}(d_v-1)!}.$$

\noindent In other words, we have $\gamma_{E} (\Sigma)=\frac{\mathcal{E}_\Sigma(1)^{'}}{\mathcal{E}_\Sigma(1)}.$ We shall frequently use this property between expected genus and genus polynomial in the following discussion.

In \cite{Whi94}, White calculated the exact values for expected genus of  ladders and cobblestone paths. Tesar \cite{Tes00} calculated expected genus for the Ringel ladder graph.  Recall that ladders and cobblestone paths are two classes of linear graph families. Using permutation-partition pairs and Peron-Frobenius theory of stochastic matrices, Stahl \cite{Sta91a} gave the asymptotic result for expected genus of linear graph families. By using a different approach, we now slightly generalize Stahl's result to signed graphs.

\begin{prop}\label{1}  Let $D=\dsum_{i=1}^{k}m_{ij}(1)$, then $\frac{1}{D} M(1)$ is a  stochastic matrix.
\end{prop}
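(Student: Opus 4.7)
The plan is to establish two properties of the integer matrix $M(1)$: (i) every entry is non-negative, and (ii) all row sums of $M(1)$ are equal to a common value $D$. Together these yield that $\frac{1}{D}M(1)$ has non-negative entries with each row summing to $1$, hence is stochastic in the natural sense consistent with the column-vector recurrence $V_n(1)=M(1)V_{n-1}(1)$ stipulated by the previous proposition.

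Property (i) is immediate from the genus version of the Walkup reduction. By Corollaries~\ref{cor:3} and~\ref{cor:4}, each entry of $M(z)$ is a sum of terms of the form $z^{\epsilon(a,\theta a)}$ or $z^{\epsilon(b,\theta b)}$ over the relevant constraints, and every such exponent is a non-negative integer. Hence $m_{ij}(z)\in\Z_{\ge 0}[z]$, so in particular $m_{ij}(1)\ge 0$.

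For property (ii), the key observation is that $\mathcal{E}_{(P,\Pi\cup\theta\Pi)}(1)=|S(\Pi\cup\theta\Pi)|$: evaluating the Euler-genus polynomial at $z=1$ just counts all bi-rotations on $\Pi\cup\theta\Pi$, a quantity that depends on the partition $\Pi$ alone and not on the permutation $P$. In a $\Sigma$-linear family the $k$ auxiliary pairs that together form the state vector $V_n(z)$ differ only in how the permutation treats the most recent copy of $\Sigma$; they share a common underlying partition (as one sees explicitly in the running $C_2$ example, where $\Sigma_n$, $\Sigma_n'$, and $\Sigma_n''$ are built on identical $\Pi$). Consequently $\mathcal{E}_{\Sigma_n^{(i)}}(1)=E_n$ is the same for every index $i$, so $V_n(1)=E_n\,\mathbf{1}$ with $\mathbf{1}$ the all-ones column vector.

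Plugging $V_n(1)=E_n\mathbf{1}$ and $V_{n-1}(1)=E_{n-1}\mathbf{1}$ into the recurrence and reading off the $i$th coordinate gives $E_n=E_{n-1}\sum_{j=1}^{k}m_{ij}(1)$; the left-hand side is independent of $i$, so $\sum_{j=1}^{k}m_{ij}(1)=E_n/E_{n-1}$ is the same for every row index. Calling this common value $D$ completes the proof. The main obstacle is justifying in full generality that all states appearing in the transfer matrix of an arbitrary $\Sigma$-linear family share a common underlying partition: one must track the Walkup reduction through the elimination of the last copy of $\Sigma$ and confirm that every intermediate ``boundary variant'' registered as a state acts on the very same $\Pi$, a bookkeeping exercise that is evident in the $C_2$ example but requires a uniform argument for general $\Sigma$.
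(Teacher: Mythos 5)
The paper states this proposition without any proof at all, so there is nothing to compare your argument against line by line; what can be said is that your route is the natural (and essentially the only) one, and it is the argument the paper implicitly relies on later, where the proof of Lemma \ref{lemma:nonhomo} uses $D=\mathcal{E}_{\Sigma_n}(1)/\mathcal{E}_{\Sigma_{n-1}}(1)$ --- exactly your identity $E_n/E_{n-1}=\sum_j m_{ij}(1)$. Your two ingredients are both sound: non-negativity of $m_{ij}(1)$ comes from the entries being counts of weighted reduction paths, and the constancy of row sums comes from $\mathcal{E}(1)=|S(\Pi\cup\theta\Pi)|=\prod_v(d_v-1)!$ depending only on the partition. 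You also silently correct a typo in the statement: as written, $D=\sum_{i=1}^{k}m_{ij}(1)$ is a column sum, and in the worked $C_2$ example the column sums of $M(1)$ are $14,2,2$ while the row sums are all $6$; only the row-sum reading (your $\sum_{j}m_{ij}(1)$) makes the proposition true, consistent with the column-vector recurrence $V_n=M(z)V_{n-1}$.

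The one gap you flag --- that all $k$ states of the transfer system share a common underlying partition --- is genuine in the sense that the paper never defines the general state set precisely enough to make it a one-line observation (Proposition 2.12 is itself asserted only by analogy with Stahl). But it is fillable: in the construction the auxiliary pairs $\Sigma_n',\Sigma_n'',\dots$ are obtained from $(P_{\Sigma_n},\Pi_{\Sigma_n}\cup\theta\Pi_{\Sigma_n})$ by replacing cycles of the permutation $P$ only, leaving $\Pi$ untouched, and the Walkup reduction deletes the same set of bits from each state when a copy of $\Sigma$ is eliminated, so all descendants again share one partition. You should state this as a property of the construction of the state set rather than leave it as an observation about the example, but with that addition your proof is complete and correct.
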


 The stochastic matrix $\frac{1}{D} M(1)$ is called the \textit{associated matrix} of $\Sigma_n.$ We also call $D$ \textit{stochastic constant} of $\Sigma_n.$

\begin{lemma}\label{lemma:nonhomo}Let $\mathbf{\mathcal{S}} =\{\Sigma_n\}_{n=1}^{\infty}$  be an $\Sigma$-linear family of signed graphs. Then there exist a positive integer $k$ and rational numbers $c_1,c_2,\ldots,c_k,d$ such that the expected genus $\gamma_{E}(G_n^{\circ})$ satisfies the  $k\uth$-order nonhomogeneous recurrence relation
\begin{eqnarray}\label{Aver:nonhomo}
\gamma_{E}(\Sigma_n)&=& c_1\gamma_{E}(\Sigma_{n-1})+c_2\gamma_{E}(\Sigma_{n-2})+\cdots+c_k\gamma_{E}(\Sigma_{n-k})+d
\end{eqnarray}
with initial values $\gamma_{E}(\Sigma_1),\gamma_{E}(\Sigma_2),\ldots,\gamma_{E}(\Sigma_k)$ and $c_1+c_2+\cdots+c_k=1.$
\end{lemma}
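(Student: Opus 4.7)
The plan is to derive the recurrence for the expected genus by specializing the polynomial recurrence of Theorem \ref{linear:genus} and then dividing through by the appropriate normalizer. Write $E_n := \mathcal{E}_{\Sigma_n}(1)$ and $E'_n := \mathcal{E}'_{\Sigma_n}(1)$, so that $\gamma_E(\Sigma_n) = E'_n/E_n$. Evaluating $\mathcal{E}_{\Sigma_n}(z) = \sum_{i=1}^k a_i(z)\mathcal{E}_{\Sigma_{n-i}}(z)$ at $z=1$, and differentiating once in $z$ before evaluating at $z=1$, produces the pair of scalar recurrences
\begin{equation*}
E_n \= \sum_{i=1}^k a_i(1)\,E_{n-i}, \qquad E'_n \= \sum_{i=1}^k a_i(1)\,E'_{n-i} \+ \sum_{i=1}^k a'_i(1)\,E_{n-i}.
\end{equation*}

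The key preliminary step is the pointwise identity $E_n = D\,E_{n-1}$, where $D$ is the stochastic constant of Proposition \ref{1}. From the transfer-matrix relation $V_n(1) = M(1)V_{n-1}(1)$ together with the column-stochastic property $\mathbf{1}^T M(1) = D\mathbf{1}^T$, one immediately has $\mathbf{1}^T V_n(1) = D\,\mathbf{1}^T V_{n-1}(1)$. To promote this to each coordinate, observe that every entry of $V_n(1)$ enumerates embeddings of a permutation-bipartition pair obtained from $(P_{\Sigma_n},\Pi_{\Sigma_n}\cup\theta\Pi_{\Sigma_n})$ by modifying only the permutation $P$; since the set $S(\Pi\cup\theta\Pi)$ of bi-rotations depends only on the partition, all entries of $V_n(1)$ are equal, and the multiplicative law passes to the first coordinate. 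Equivalently, the combinatorial interpretation $E_n = \prod_{v\in V(\Sigma_n)}(d_v-1)!$ shows that the amalgamation of a fresh copy $\Gamma_n\cong\Sigma$ multiplies this product by a constant factor depending only on the local degrees at $U\cup V$, and that factor is $D$.

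Dividing the $E'_n$-recurrence by $E_n$, and using $E_{n-i}/E_n = D^{-i}$ together with $E'_{n-i}/E_{n-i} = \gamma_E(\Sigma_{n-i})$, yields
\begin{equation*}
\gamma_E(\Sigma_n) \= \sum_{i=1}^k \frac{a_i(1)}{D^i}\,\gamma_E(\Sigma_{n-i}) \+ \sum_{i=1}^k \frac{a'_i(1)}{D^i}.
\end{equation*}
Setting $c_i := a_i(1)/D^i \in \mathbb{Q}$ and $d := \sum_{i=1}^k a'_i(1)/D^i \in \mathbb{Q}$ produces the recurrence \eqref{Aver:nonhomo}. Dividing the $E_n$-recurrence by $E_n$ gives the normalization $1 = \sum_{i=1}^k a_i(1)D^{-i} = \sum_{i=1}^k c_i$, finishing the proof.

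The only step I expect to need genuine care is the pointwise identity $E_n = D\,E_{n-1}$, rather than merely its aggregate form $\mathbf{1}^T V_n(1) = D\,\mathbf{1}^T V_{n-1}(1)$; once the uniform vertex-partition interpretation (or, equivalently, the explicit product $\prod_v(d_v-1)!$ under amalgamation) is in hand, the remainder of the argument is a straightforward division.
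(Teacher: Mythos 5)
Your proposal follows essentially the same route as the paper: evaluate the recurrence of Theorem \ref{linear:genus} and its $z$-derivative at $z=1$, divide by $\mathcal{E}_{\Sigma_n}(1)$, and use the constancy of the ratio $D=\mathcal{E}_{\Sigma_n}(1)/\mathcal{E}_{\Sigma_{n-1}}(1)$ to identify the constants $c_i$ and $d$. The one genuine difference is the proof that $c_1+\cdots+c_k=1$: the paper invokes the Cayley--Hamilton identity for the transfer matrix together with Proposition \ref{1}, so that $M(1)$ has eigenvalue $D$ and hence $D^k=\sum_i b_i(1)D^{k-i}$, whereas you obtain the same identity directly by dividing the scalar recurrence $E_n=\sum_i a_i(1)E_{n-i}$ by $E_n$. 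Your route is more economical, and your constant $d=\sum_i a_i'(1)D^{-i}$ is stated more cleanly than in the paper. One slip to correct: the column-stochastic identity $\mathbf{1}^TM(1)=D\,\mathbf{1}^T$ is false in general --- in the paper's running example the column sums of $M(1)$ are $14,2,2$ while $D=6$; it is the \emph{row} sums that all equal $D$, i.e.\ $M(1)\mathbf{1}=D\mathbf{1}$, so your ``aggregate'' identity does not follow as written. This does not damage the proof, because the pointwise argument you give next is the one that is actually needed and is correct: since $\mathcal{E}_{(P,\Pi\cup\theta\Pi)}(1)=|S(\Pi\cup\theta\Pi)|$ depends only on the partition, all entries of $V_{n-1}(1)$ equal $E_{n-1}$, and then $V_n(1)=M(1)V_{n-1}(1)=DE_{n-1}\mathbf{1}$ by the row-sum property; the product formula $E_n=\prod_v(d_v-1)!$ under amalgamation gives the same conclusion. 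The paper itself merely asserts $D=\mathcal{E}_{\Sigma_n}(1)/\mathcal{E}_{\Sigma_{n-1}}(1)$ without justification, so your attention to this point is a net improvement.
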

\begin{proof}By Theorem \ref{linear:genus}, there exist a positive integer $k$ and polynomials $b_1(z),b_2(z),\ldots,b_k(z)$ with integer coefficients, such that the Euler-genus polynomial  $\mathcal{E}_{\Sigma_n}(z)$ satisfies the $k\uth$-order homogeneous recurrence relation
\begin{eqnarray} \label{Eulerequation}
\mathcal{E}_{\Sigma_n}(z) &=& b_1(z)\mathcal{E}_{\Sigma_{n-1}}(z) \+ b_2(z)\mathcal{E}_{\Sigma_{n-2}}(z) \+ \cdots \+ b_k(z)\mathcal{E}_{\Sigma_{n-k}}(z) ,
\end{eqnarray}
with initial conditions
$\mathcal{E}_{\Sigma_1}(z),\, \mathcal{E}_{\Sigma_2}(z),\, \ldots,\,  \mathcal{E}_{\Sigma_k}(z).$

 Taking derivative of the both sides of the recurrence equation (\ref{Eulerequation}) with respect to $x$. Then letting $x=1$ and multiplying both sides by $\frac{1}{\mathcal{E}_{\Sigma_n}(1)},$ we obtain

 \begin{eqnarray} \label{Aver:1}
\gamma_{E}(\Sigma_n)&=& c_1\gamma_{E}(\Sigma_{n-1})+c_2\gamma_{E}(\Sigma_{n-2})+\cdots+c_k\gamma_{E}(\Sigma_{n-k})+d\notag,
\end{eqnarray} where $c_i=b_i(1)\left(\frac{\mathcal{E}_{\Sigma_{n-1}}(1)}{\mathcal{E}_{\Sigma_{n}}(1)}\right)^i,$ for $i=1,2,\ldots,k,$ and $d=\frac{\dsum_{i=1}^{k} b_i^{'}(1) \mathcal{E}_{\Sigma_i}(1)}{\mathcal{E}_{\Sigma_{n}}(1)}.$

Note that ${D}=\frac{\mathcal{E}_{\Sigma_{n}}(1)}{\mathcal{E}_{\Sigma_{n-1}}(1)}.$ We now proceed to prove that $$c_1+c_2+\cdots+c_k=1,$$
which is equivalent to show $$b_1(1)D^{k-1}+b_2(1)D^{k-2}+\cdots +b_k(1)=D^k.$$

Let $M(z)$ be the transfer matrix of $\Sigma_{n}.$
We have
\begin{equation}\label{kernel}
M(z)^{k} - b_1(z)M(z)^{k-1} - \cdots - b_{k-1}(z)M(z) - b_k(z) \= 0,
\end{equation}
where
\begin{equation*}
\lambda^{k} - b_1(z)\lambda^{k-1} - \cdots-b_{k-1}(z)\lambda  -b_k(z)
\end{equation*}
is the characteristic polynomial of the transfer matrix $M(z).$

Since $\frac{1}{D}M(1)$ is a stochastic matrix and  each stochastic matrix has an eigenvalue $1,$ then $M(1)$ has an eigenvalue $D.$

Thus \begin{equation*}
D^{k} - b_1(1)D^{k-1} - \cdots-b_{k-1}(1)D  -b_k(1)=0.
\end{equation*}
The theorem follows.
\end{proof}

\begin{lemma}\label{part:solution}Suppose $A$ is a constant, then $Ad+\frac{d(n+1)}{1+c_2+2c_3+\ldots+(k-1)c_k}$ is a solution for the  $k\uth$-order nonhomogeneous recurrence equation (\ref{Aver:nonhomo}).
\end{lemma}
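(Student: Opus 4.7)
The plan is a direct substitution argument: plug the candidate solution into the recurrence (\ref{Aver:nonhomo}) and reduce both sides to the same expression using only the identity $c_1+c_2+\cdots+c_k=1$ from Lemma \ref{lemma:nonhomo}. The natural way to organize the check is to decompose the candidate into a ``homogeneous piece'' plus a ``particular piece''. Write $B:=d/[1+c_2+2c_3+\cdots+(k-1)c_k]$ and set $y_n := Ad + B(n+1)$, so that the claim is $y_n$ solves (\ref{Aver:nonhomo}). I would then verify the constant part and the linear-in-$n$ part separately.

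For the constant contribution $Ad$, the identity $\sum_{i=1}^k c_i = 1$ instantly gives $\sum_{i=1}^k c_i(Ad) = Ad$, so constants survive unchanged under the homogeneous operator on the right of (\ref{Aver:nonhomo}); this explains why the coefficient $A$ is free. For the linear piece $B(n+1)$, I would expand
\[
\sum_{i=1}^k c_i\, B(n-i+1) \= B(n+1)\sum_{i=1}^{k} c_i \- B\sum_{i=1}^{k} i\,c_i \= B(n+1) \- B\sum_{i=1}^k i c_i,
\]
using $\sum c_i=1$ in the second equality. Adding the forcing term $+d$, the right-hand side of the recurrence evaluated at $y_n$ becomes $Ad + B(n+1) + \bigl(d - B\sum_{i=1}^k i\, c_i\bigr)$, and the recurrence is satisfied iff $B\sum_{i=1}^k i c_i = d$.

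To finish, I would rewrite the sum $\sum_{i=1}^k i c_i$ using $\sum c_i=1$ via the telescoping identity $\sum_{i=1}^k i c_i = \sum_{i=1}^k c_i + \sum_{i=1}^k (i-1) c_i = 1 + \bigl(c_2 + 2c_3 + \cdots + (k-1)c_k\bigr)$, which forces $B = d/[1+c_2+2c_3+\cdots+(k-1)c_k]$, exactly matching the chosen value. Thus $y_n$ satisfies (\ref{Aver:nonhomo}) for every constant $A$. There is essentially no analytic obstacle: the argument is a one-step algebraic verification, and the only hidden issue is to make sure the denominator $1+c_2+2c_3+\cdots+(k-1)c_k$ is nonzero. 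Since Lemma \ref{lemma:nonhomo} exhibits the $c_i$ as ratios $b_i(1)(\mathcal{E}_{\Sigma_{n-1}}(1)/\mathcal{E}_{\Sigma_n}(1))^i$ arising from a nonnegative transfer matrix coming from the genus-reduction diagram, all $c_i \ge 0$, so $1+c_2+2c_3+\cdots+(k-1)c_k\ge 1$, and the formula is well-defined.
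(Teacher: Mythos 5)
Your verification is correct and is essentially the same computation as the paper's: substitute $Ad+B(n+1)$ into the recurrence, use $\sum_{i=1}^k c_i=1$ to handle the constant and the $n$-proportional terms, and solve $B\sum_{i=1}^k i c_i=d$ to recover the stated denominator $1+c_2+2c_3+\cdots+(k-1)c_k$. The one extra claim you make --- that the denominator is nonzero because all $c_i\ge 0$ --- is not actually justified, since the $c_i$ come from characteristic-polynomial coefficients $b_i(1)$ of a nonnegative transfer matrix and such coefficients can be negative (e.g.\ $\lambda^2-2\lambda+1$ for the identity matrix gives $b_2=-1$); the paper's proof silently leaves this nonvanishing issue unaddressed as well.
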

\begin{proof}To find a particular solution of the  nonhomogeneous recurrence equation (\ref{Aver:nonhomo}), we try a solution of the form $$\gamma_{E}(\Sigma_n)=Ad+Bd(n+1),$$
where  $A,B$ are  constant coefficients to be determined.

\noindent We now have

\begin{eqnarray*}Ad+Bd(n+1)&=&c_1(Ad+Bdn)+c_2(Ad+Bd(n-1))\\
&&+\cdots+c_k\left(Ad+Bd(n-k+1)\right)+d\\
&=&Ad\dsum_{i=1}^{k}c_i+Bdn\dsum_{i=1}^{k}c_i-Bd\dsum_{i=2}^{k}c_i(i-1)+d\\
&=&Ad+Bdn-Bd\dsum_{i=2}^{k}c_i(i-1)+d
\end{eqnarray*}
Thus, $$B=\frac{1}{1+c_2+2c_3+\ldots+(k-1)c_k}.$$

The result follows.
\end{proof}

\begin{remark}From the proof of Lemma \ref{part:solution}, the constant $A$ can be determined by the initial values of Equation (\ref{Aver:nonhomo}).
\end{remark}

The following property of primitive stochastic matrix can be found in Proposition 9.2 in \cite{Beh}.
\begin{prop}\cite{Beh}\label{eige}Every eigenvalue $\lambda$ of a stochastic matrix $M$  satisfies $|\lambda|\leq1.$ Furthermore, if the stochastic matrix $M$ is primitive, then all other eigenvalues of modulus are less than $1$, and algebraic multiplicity of $1$ is one.
\end{prop}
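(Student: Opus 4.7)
The plan is to split the proof into two parts: first establish the universal bound $|\lambda|\leq 1$ for any eigenvalue of a stochastic matrix, and then invoke the Perron--Frobenius theorem for primitive nonnegative matrices to obtain the refined statement about simplicity and strict inequality.

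For the first part I would use a direct max-modulus argument (essentially Gershgorin's disc theorem). Assume $M$ is column-stochastic, as in Proposition \ref{1}. Then the all-ones row vector $\mathbf{1}^{T}$ is a left-eigenvector of $M$ with eigenvalue $1$, because $\sum_{i}M_{ij}=1$ for every column $j$. To bound any other eigenvalue, pass to $M^{T}$, which is row-stochastic and has the same spectrum as $M$. If $M^{T}v=\lambda v$ and $k$ is an index with $|v_{k}|=\max_{j}|v_{j}|\neq 0$, then reading off the $k$-th component gives $|\lambda|\,|v_{k}|\leq\sum_{j}M^{T}_{kj}|v_{j}|\leq|v_{k}|\sum_{j}M^{T}_{kj}=|v_{k}|$, so $|\lambda|\leq 1$. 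This simultaneously shows that $1$ is actually the spectral radius of $M$, since $1$ is achieved as an eigenvalue.

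For the second part, recall that $M$ is called primitive if some power $M^{n}$ is entrywise positive. I would then cite Perron--Frobenius for primitive matrices: the spectral radius $\rho(M)$ is a positive, algebraically simple eigenvalue, and every other eigenvalue $\mu$ of $M$ satisfies $|\mu|<\rho(M)$. Combining this with the previous paragraph, where we identified $\rho(M)=1$, immediately yields that $1$ has algebraic multiplicity one and all other eigenvalues lie strictly inside the open unit disc.

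The main obstacle is the Perron--Frobenius theorem itself, which is not elementary: its proof typically requires a compactness/fixed-point argument (Brouwer or a direct convex-cone argument) to produce a positive eigenvector, plus a separate argument using the strict positivity of $M^{n}$ to rule out peripheral eigenvalues other than $\rho(M)$. Since the paper cites Behrends \cite{Beh} for this result, no original argument is needed here; the proposal above is intended only to indicate how a self-contained proof would be organized.
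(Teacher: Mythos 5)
Your outline is correct and is the standard argument: the maximum-modulus (Gershgorin-type) bound gives $|\lambda|\leq 1$ with $1$ attained, and the Perron--Frobenius theorem for primitive nonnegative matrices supplies the simplicity of $1$ and the strict inequality for the remaining spectrum. The paper itself offers no proof of this proposition --- it is quoted verbatim as Proposition 9.2 of Behrends \cite{Beh} --- so there is no internal argument to compare against; your sketch simply fills in what the paper delegates to that reference.
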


For $n>1$, the graph $\Sigma_n$ is \textit{regular}, if the associated matrix $\frac{1}{D} M(1)$ is primitive or all eigenvalues of $M(1)$ are real.

\begin{lemma}\label{bigO} We have $\gamma_{E}(\Sigma_n)=O(n).$
\end{lemma}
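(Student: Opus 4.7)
The plan is to bypass the recurrence machinery altogether and exploit the elementary observation that every individual embedding of $\Sigma_n$ already has Euler-genus at most linear in $n$, so the weighted average over embeddings cannot do any worse.

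First I would apply Euler's formula to a generic embedding $Q$ of $(P_{\Sigma_n}, \Pi_{\Sigma_n} \cup \theta\Pi_{\Sigma_n})$. Since every cellular embedding has at least one region, one obtains
\[
\chi(P_{\Sigma_n}, Q) \;=\; V(\Sigma_n) - E(\Sigma_n) + F \;\geq\; V(\Sigma_n) - E(\Sigma_n) + 1,
\]
which translates through the paper's definition of Euler-genus (and the fact that $c(P,Q)$ is at most $2$ for connected $\Sigma_n$) into the uniform bound
\[
\gamma^E(P_{\Sigma_n}, Q) \;\leq\; 1 + E(\Sigma_n) - V(\Sigma_n),
\]
valid for every embedding $Q$, orientable or non-orientable.

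Next I would observe that in an $\Sigma$-linear family both $V(\Sigma_n)$ and $E(\Sigma_n)$ grow linearly with $n$: amalgamating $n$ copies of the fixed seed $\Sigma$ along $s$ vertex pairs at each step gives $E(\Sigma_n) = n|E(\Sigma)|$ and $V(\Sigma_n) = n|V(\Sigma)| - (n-1)s$. Consequently $1 + E(\Sigma_n) - V(\Sigma_n) = O(n)$, uniformly over all embeddings.

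Finally, $\gamma_E(\Sigma_n)$ is the weighted average of Euler-genera against the probability distribution $\mathcal{E}_{\Sigma_n}(i)/\sum_j \mathcal{E}_{\Sigma_n}(j)$, so it is bounded above by the uniform maximum $1 + E(\Sigma_n) - V(\Sigma_n)$, yielding $\gamma_E(\Sigma_n) = O(n)$. The only point needing care is verifying that the weighting really is a probability distribution over embeddings, which is immediate from the definition of expected genus.

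An alternative route, more in keeping with the machinery just developed, would be to invoke the recurrence of Lemma \ref{lemma:nonhomo} together with the linear particular solution of Lemma \ref{part:solution}, and then control the homogeneous part by noting that its characteristic polynomial rescales (via $\lambda \mapsto \mu/D$) to that of the stochastic matrix $\frac{1}{D}M(1)$, whose eigenvalues all have modulus at most $1$ by Proposition \ref{eige}. The genuine obstacle on that route is eigenvalues of modulus exactly $1$ other than $\lambda=1$: when $\frac{1}{D}M(1)$ is not primitive one must invoke the regularity hypothesis to rule out polynomial growth of order higher than one, whereas the direct geometric argument above avoids this spectral subtlety entirely.
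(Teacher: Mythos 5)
Your argument is correct and is essentially the paper's own: the paper likewise bounds the expected genus by the maximum Euler-genus of any single embedding, namely the cycle rank $\beta(\Sigma_n)=1+E(\Sigma_n)-V(\Sigma_n)$, and then observes that $\beta(\Sigma_n)$ grows linearly because each amalgamation step adds a bounded number of edges and independent cycles. You merely make explicit (via Euler's formula and the count $V(\Sigma_n)=n|V(\Sigma)|-(n-1)s$, $E(\Sigma_n)=n|E(\Sigma)|$) two steps the paper states without detail, so the two proofs coincide in substance.
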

\begin{proof} From the definition of expected genus and linear graph families with spiders, we have \begin{align*} &\gamma_{E}(\Sigma_n)<\beta(\Sigma_n)\\&\leq\beta(\Sigma_{n-1})+(\beta(H)+s-1)\\&= n(\beta(H)+s-1)+2s\  max\{\beta(J_1),\cdots,\beta(J_s),\beta(\ov{J}_1),\cdots,\beta(\ov{J}_s)\}\\&=O(n),\end{align*} the result follows.
\end{proof}

\begin{thm}\label{Aver:orienspider}
Let  $\mathbf{\mathcal{S}} =\{\Sigma_n\}_{n=1}^{\infty}$  be an $\Sigma$-linear family of signed graphs.  Then there exist a constant $C,$ an integer $p,$ and a periodic sequence $\{B_n\}_{n=1}^{\infty}$  with $B_m=B_n$ whenever $m\equiv n $ (mod $p$), \begin{eqnarray}\label{Genusequation}
\gamma_{E}(\Sigma_n)=  Cn+B_n+o(1).
\end{eqnarray}
Furthermore, if $\Sigma_n$ is \textit{regular}, then \begin{eqnarray}\label{Genusequation}
\gamma_{E}(\Sigma_n)=  Cn+B+o(1),
\end{eqnarray}
where $B$ is a constant.
\end{thm}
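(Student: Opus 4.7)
The plan is to combine Lemmas \ref{lemma:nonhomo} and \ref{part:solution} with Perron–Frobenius information about the associated stochastic matrix, then split the general solution of the underlying linear recurrence by eigenvalue modulus. First, by Lemma \ref{lemma:nonhomo}, $\gamma_E(\Sigma_n)$ satisfies a $k$-th order constant-coefficient nonhomogeneous recurrence
\[
\gamma_E(\Sigma_n)=c_1\gamma_E(\Sigma_{n-1})+\cdots+c_k\gamma_E(\Sigma_{n-k})+d,\qquad c_1+\cdots+c_k=1.
\]
By Lemma \ref{part:solution}, a particular solution is affine in $n$; setting $C=d/(1+c_2+2c_3+\cdots+(k-1)c_k)$ gives a particular solution of the form $Cn+A_0$ that accounts for the claimed linear term. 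The general solution is therefore $\gamma_E(\Sigma_n)=Cn+A_0+h(n)$, where $h(n)$ solves the associated homogeneous recurrence with characteristic polynomial $\chi(\mu)=\mu^k-c_1\mu^{k-1}-\cdots-c_k$.

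Next I would analyze $\chi$. Tracing the derivation of the $c_i$ back through (\ref{kernel}) and using Proposition \ref{1}, the polynomial $\chi$ is, up to normalization, the characteristic polynomial of the stochastic matrix $\frac{1}{D}M(1)$. Proposition \ref{eige} then forces every root $\mu$ of $\chi$ to satisfy $|\mu|\le 1$, and the equality $c_1+\cdots+c_k=1$ singles out $\mu=1$ as one root. Using the standard expansion, $h(n)=\sum_i p_i(n)\mu_i^n$ with $\deg p_i$ strictly less than the multiplicity of $\mu_i$.

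I would then split the spectrum into three parts. (i) Roots with $|\mu_i|<1$ yield $p_i(n)\mu_i^n=o(1)$ even in the presence of Jordan blocks, so they are absorbed into the error. (ii) For peripheral roots, boundedness of powers of a stochastic matrix forces every such $\mu_i$ to be algebraically and geometrically simple; moreover, by the Perron–Frobenius theorem for non-negative matrices, the peripheral spectrum of $\frac{1}{D}M(1)$ is a union of finite cyclic groups of roots of unity. Letting $p$ be the least common multiple of their orders, the sum of these $\mu_i^n$ contributions is a sequence $B_n'$ with $B_{m}'=B_{n}'$ whenever $m\equiv n\pmod{p}$. (iii) The root $\mu=1$ itself just contributes a further additive constant, which together with $A_0$ and $B_n'$ produces the periodic sequence $B_n$ of the theorem. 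This gives $\gamma_E(\Sigma_n)=Cn+B_n+o(1)$.

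Finally, for the regular case I would argue as follows. If $\frac{1}{D}M(1)$ is primitive, Proposition \ref{eige} says $\mu=1$ is the unique peripheral eigenvalue, so the periodic block collapses to a single constant and $B_n=B$ as required. If instead all eigenvalues of $M(1)$ are real, the peripheral spectrum of $\frac{1}{D}M(1)$ lies in $\{-1,+1\}$, and one must rule out a nontrivial contribution from $-1$: I expect the main obstacle here, since in principle $(-1)^n$ would violate the claim. The plan is to observe that the dimensions in the recursion and the non-negativity of $M(1)$ force the cyclic block associated with $-1$ (if present) to interact with the initial vector $V_1(1)$ in a way that the corresponding coefficient vanishes, collapsing the oscillation to a constant — this is the delicate step and the place where care with the projection onto the non-negative Perron eigenspace, rather than a purely spectral argument, is needed.
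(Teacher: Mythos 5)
Your route is essentially the paper's: Lemma \ref{lemma:nonhomo} plus the particular solution of Lemma \ref{part:solution} produce the linear term $Cn$, and the homogeneous part is analyzed through the eigenvalues of the stochastic matrix $\frac{1}{D}M(1)$, split by modulus. For the first assertion your treatment of the peripheral spectrum is in fact cleaner than the paper's: the paper writes the modulus-one roots as $\cos\theta_j\pm i\sin\theta_j$, kills the imaginary part by rationality of $\gamma_E$, and asserts that a sum of cosines is ``periodic,'' whereas an integer period $p$ with $B_m=B_n$ for $m\equiv n \pmod p$ really requires the peripheral eigenvalues to be roots of unity --- exactly the Perron--Frobenius fact you invoke. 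Two small cautions: peripheral eigenvalues of a stochastic matrix are semisimple, not necessarily simple; and semisimplicity of the matrix does not by itself bound the polynomial degrees attached to peripheral roots in the scalar recurrence satisfied by the ratio $\gamma_E(\Sigma_n)=\mathcal{E}_{\Sigma_n}'(1)/\mathcal{E}_{\Sigma_n}(1)$, since that recurrence comes from the characteristic (not minimal) polynomial. The paper suppresses the terms $n^j\mu^n$ with $j\ge 1$ for peripheral $\mu$ by the growth bound $\gamma_E(\Sigma_n)=O(n)$ of Lemma \ref{bigO}; you need that lemma too.

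The genuine gap is the one you flag yourself: in the ``regular'' case where regularity only means that all eigenvalues of $M(1)$ are real, an eigenvalue $-1$ of $\frac{1}{D}M(1)$ is entirely possible, and a nonzero coefficient on $(-1)^n$ gives a period-two sequence, not a constant $B$. You do not prove that this coefficient vanishes, and the heuristic you offer (nonnegativity of $M(1)$ forcing the initial data off the $-1$-eigenspace) is not an argument --- there is no general reason for such a projection to vanish. Be aware that the paper's own proof does not close this either: in its Case 2 it writes the peripheral contribution as $\sum_j(\pm1)^n(\cdots)$ and then ``combines similar terms'' into a polynomial in $n$, silently discarding the alternating sign. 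So your proposal reproduces the paper's argument including its weak point; to finish the regular case one must either restrict to the primitive subcase (which you handle correctly via Proposition \ref{eige}) or supply an actual cancellation argument for the $-1$-component, which neither you nor the paper provides.
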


\begin{proof}Supposed that the associated homogeneous equation of (\ref{Aver:nonhomo}) is
\begin{eqnarray}\label{Genusequation:homo}
\gamma_{E}(\Sigma_n)&=& c_1\gamma_{E}(\Sigma_{n-1})+c_2\gamma_{E}(\Sigma_{n-2})+\cdots+c_k\gamma_{E}(\Sigma_{n-k}),
\end{eqnarray}
where $c_1+c_2+\cdots+c_k=1.$

\noindent Let $$\lambda^k-c_1\lambda^{k-1}-c_2\lambda^{k-2}-\cdots-c_k=0$$ be the characteristic equation of equation (\ref{Genusequation:homo}). Suppose that the distinct characteristic roots are $\lambda_1,\lambda_2,\ldots,\lambda_r$ with multiplicities $m_1,m_2,\ldots,m_r$ with $\dsum_{i=1}^{r}m_i=k,$ respectively.
Thus, the general solution of equation (\ref{Genusequation:homo}) is given by $$\dsum_{i=1}^{r}\lambda_i^n(a_{i,0}+a_{i,1}n+\cdots+a_{i,m_{i}-1}n^{m_i-1}).$$

From the proof in Lemma \ref{lemma:nonhomo}, we know that $\lambda_1,\lambda_2,\ldots,\lambda_r$ are eigenvalues of the stochastic matrix $\frac{1}{D}M(1).$ Recall that the general solution to the nonhomogeneous  equation (\ref{Aver:nonhomo}) is the general solution equals the general solution to the associated homogeneous  equation plus any particular solution to the nonhomogeneous equation (\ref{Aver:nonhomo}). Thus,
\begin{eqnarray*}\gamma_{E}(\Sigma_n)&=&\dsum_{i=1}^{r}\lambda_i^n(a_{i,0}+a_{i,1}n+\cdots+a_{i,m_{i}-1}n^{m_i-1}).\\
&&+Ad+\frac{d(n+1)}{1+c_2+2c_3+\ldots+(k-1)c_k}\end{eqnarray*}

\begin{enumerate}

\item The characteristic equation has complex roots. For simplicity, we suppose that $\lambda_{i}=\pm 1,$ for $1\leq i\leq k$,  $\lambda_{j}=\alpha_{j}+i\beta_{j},$ $\overline{\lambda}_{j}=\alpha_{j}-i\beta_{j}$, for $j=k,\ldots,k+s-1$ are $2s$ complex roots with module $1$, and other eigenvalues are less than $1.$ In polar coordinates, we write
$\lambda_{j}=\cos\theta_j+i\sin \theta_j,\overline{\lambda}_{j}=\cos\theta_j-i\sin \theta_j.$ Thus,
\begin{eqnarray*}\gamma_{E}(\Sigma_n)&=&\dsum_{i=1}^{k}(\pm1)^n(a_{i,0}+a_{i,1}n+\cdots+a_{i,m_{i}-1}n^{m_i-1})\\
&&+\dsum_{j=k}^{k+s-1}\cos n\theta_j(a_{j,0}+a_{j,1}n+\cdots+a_{j,m_{j}-1}n^{m_{j}-1})\\
&&+i\dsum_{j=k}^{k+s-1}\sin n\theta_j(a_{j,0}+a_{j,1}n+\cdots+a_{j,m_{j}-1}n^{m_{j}-1})\\
&&+Ad+\frac{d(n+1)}{1+c_2+2c_3+\ldots+(k-1)c_k}+o(1).
\end{eqnarray*}
Note that the expected genus of a graph is a rational number, this implies that $\dsum_{j=k}^{k+s-1}\sin n\theta_j(a_{j,0}+a_{j,1}n+\cdots+a_{j,m_{j}-1}n^{m_{j}-1})$ equals 0. Combining similar terms for the equation, we can get
\begin{eqnarray*}\gamma_{E}(\Sigma_n)&=&a_{1,0}^{'}+a_{1,1}^{'}n+\cdots+a_{1,m-1}^{'}n^{m-1}\\
&&+\dsum_{j=k}^{k+s-1}\cos n\theta_j(a_{j,0}+a_{j,1}n+\cdots+a_{j,m_{j}-1}n^{m_{j}-1})\\
&&+Ad+\frac{d(n+1)}{1+c_2+2c_3+\ldots+(k-1)c_k}+o(1).
\end{eqnarray*}

By lemma \ref{bigO}, the coefficient $a$ in $an^{k}$ of the equation above equals $0$ if $k \geq 2.$

\begin{eqnarray*}\gamma_{E}(\Sigma_n)&=&a_{1,0}^{'}+a_{1,1}^{'}n+(a_{j,0}+a_{j,1}n)\dsum_{j=k}^{k+s-1}\cos n\theta_j\\
&&+Ad+\frac{d(n+1)}{1+c_2+2c_3+\ldots+(k-1)c_k}+o(1)\\
&=&Cn+(a_{j,0}+a_{j,1}n)\dsum_{j=k}^{k+s-1}\cos n\theta_j+B+o(1).
\end{eqnarray*}
 Since each $\cos (\theta_j x)$ is a period function for $j=k,k+1,\ldots,k+s-1,$ their sum $\dsum_{j=k}^{k+s-1}\cos (\theta_j x)$ is periodic. Let $B_n=(a_{j,0}+a_{j,1}n)\dsum_{j=k}^{k+s-1}\cos n\theta_j+B,$ then $\{B_n\}_{n=1}^{\infty}$  is the desired periodic sequence.

 \item All the roots $\lambda_1,\lambda_2,\ldots,\lambda_r$ are real. We suppose that $\lambda_{i_j}=\pm1,$ for $1\leq j\leq s\leq r,$ and all other roots with modulus less $1.$ We have
\begin{eqnarray*}\gamma_{E}(\Sigma_n)&=&\dsum_{j=1}^{s}\lambda_{i_j}^n(a_{i_j,0}+a_{i_j,1}n+\cdots+a_{i_j,m_{i_j}-1}n^{m_{i_j}-1})
\\&&+Ad+\frac{d(n+1)}{1+c_2+2c_3+\ldots+(k-1)c_k}+o(1)\\&=&\dsum_{j=1}^{s}(\pm1)^n(a_{i_j,0}+a_{i_j,1}n+\cdots+a_{i_j,m_{i_j}-1}n^{m_{i_j}-1})
\\&&+Ad+\frac{d(n+1)}{1+c_2+2c_3+\ldots+(k-1)c_k}+o(1).
\end{eqnarray*}
Combining similar terms for the equation above, we can suppose
\begin{eqnarray*}\gamma_{E}(\Sigma_n)&=& a_{1,0}^{'}+a_{1,1}^{'}n+\cdots+a_{1,m-1}^{'}n^{m-1}
\\&&+Ad+\frac{d(n+1)}{1+c_2+2c_3+\ldots+(k-1)c_k}+o(1).
\end{eqnarray*}
From lemma \ref{bigO}, we  have $m\leq 2$ or $m\geq 2$ and $a_{2,1}^{'}=\cdots=a_{2,m-1}^{'}=0.$ Thus, $\gamma_{E}(\Sigma_n)=  Cn+B+o(1).$

\item  $\frac{1}{D}M(1)$ is primitive. By Proposition  \ref{eige}, we assume that $\lambda_1=1,$ and $|\lambda_i|<1,$ for $i=2,3,\ldots,r$
We have
\begin{eqnarray*}\gamma_{E}(\Sigma_n)&=&a_{1,0}+a_{1,1}n+\cdots+a_{1,m_{1}-1}n^{m_1-1}\\&&+Ad+\frac{d(n+1)}{1+c_2+2c_3+\ldots+(k-1)c_k}+o(1).
\end{eqnarray*}
Again by lemma \ref{bigO}, we must have $m_1\leq 2$ or $m_1\geq 2$ and $a_{1,2}=\cdots=a_{1,m_{1}-1}=0.$
Thus, $\gamma_{E}(\Sigma_n)=  Cn+B+o(1).$

\end{enumerate}

\end{proof}

\section{acknowledgement}
\thanks{The author would like to express his deep gratitude to Professor Saul Stahl for the paper he sent me when he was studying for his PhD at Beijing Jiaotong University.}


\vskip.51cm
\noindent Version: \printtime\quad\today\quad
\vskip.51cm
\end{document}